\newcounter{dtlForSubmission} \setcounter{dtlForSubmission}{0}
 \newcounter{DetailLevel} \setcounter{DetailLevel}{\value{dtlForSubmission}}
\newcommand{\DetailMarginNote}[1]{
    \ifthenelse{\value{DetailLevel}=\value{dtlMarginComments} \or \value{DetailLevel}>\value{dtlMarginComments}}
        {{\small #1}}{}
    }
\newcommand{\DetailSome}[1]{
    \ifthenelse{\value{DetailLevel}=\value{dtlSomeDetail} \or \value{DetailLevel}>\value{dtlSomeDetail}}
        {{\small \textbf{Detailed compile only}: #1}}{}
    }
\newcommand{\DetailFull}[1]{
    \ifthenelse{\value{DetailLevel}=\value{dtlFullDetails} \or \value{DetailLevel}>\value{dtlFullDetails}}
        {{\small \textbf{Detailed compile only}: #1}}{}
    }
\newcommand{\NotDetailSome}[1]{
    \ifthenelse{\value{DetailLevel}=\value{dtlSomeDetail} \or \value{DetailLevel}>\value{dtlSomeDetail}}
        {}{#1}
    }
\newcommand{\NotDetailFull}[1]{
    \ifthenelse{\value{DetailLevel}=\value{dtlFullDetails} \or \value{DetailLevel}>\value{dtlFullDetails}}
        {}{#1}
    }
\newcommand{\DetailSomeElse}[2]{
    \ifthenelse{\value{DetailLevel}=\value{dtlSomeDetail} \or \value{DetailLevel}>\value{dtlSomeDetail}}
        {{\small \textbf{Detailed compile only}: #1}}{#2}
    }
\newcommand{\DetailFullElse}[2]{
    \ifthenelse{\value{DetailLevel}=\value{dtlFullDetails} \or \value{DetailLevel}>\value{dtlFullDetails}}
        {{\small \textbf{Detailed compile only}: #1}}{#2}
    }
\newcommand{\DetailSomeInline}[1]{
    \ifthenelse{\value{DetailLevel}=\value{dtlSomeDetail} \or \value{DetailLevel}>\value{dtlSomeDetail}}
        {{\small #1}}{}
    }
\newcommand{\DetailFullInline}[1]{
    \ifthenelse{\value{DetailLevel}=\value{dtlFullDetails} \or \value{DetailLevel}>\value{dtlFullDetails}}
        {{\small #1}}{}
    }
\newcommand{\DetailSomeElseInline}[2]{
    \ifthenelse{\value{DetailLevel}=\value{dtlSomeDetail} \or \value{DetailLevel}>\value{dtlSomeDetail}}
        {{\small #1}}{#2}
    }
\newcommand{\DetailFullElseInline}[2]{
    \ifthenelse{\value{DetailLevel}=\value{dtlFullDetails} \or \value{DetailLevel}>\value{dtlFullDetails}}
        {{\small #1}}{#2}
    }
\newcommand{\ExplainDetailLevel}{
    Detail level is
    \ifthenelse{\value{DetailLevel}=\value{dtlForSubmission}}
        {0: for submission}
        {\ifthenelse{\value{DetailLevel}=\value{dtlMarginComments}}
            {1: as for submission but with margin comments}
            {\ifthenelse{\value{DetailLevel}=\value{dtlSomeDetail}}
                {2: some proofs not intended for submission}
               {\ifthenelse{\value{DetailLevel}=\value{dtlFullDetails}}
                   {3: full details}
                   {invalid}
                }
            }
        }
    }
\newcommand{\Ignore}[1]{}
\newtheorem{theorem}{Theorem}[section]
\newtheorem{lemma}[theorem]{Lemma}
\newtheorem{prop}[theorem]{Proposition}
\newtheorem{cor}[theorem]{Corollary}
\theoremstyle{definition}
\newtheorem{definition}[theorem]{Definition}
\newtheorem{remark}[theorem]{Remark}
\numberwithin{equation}{section}
\newcommand{\abs}[1]{\left\vert#1\right\vert}
\newcommand{\skipline}{\vspace{11pt}}
\newcommand{\real}{\mathbb{R}}
\newcommand{\BB}[1]{\ensuremath{\mathbb{#1}}}
\newcommand{\R}{\ensuremath{\BB{R}}} %
\newcommand{\iny}{\ensuremath{\infty}}
\newcommand{\grad}{\ensuremath{\nabla}}
\DeclareMathOperator{\dv}{div} %
\DeclareMathOperator{\curl}{curl} %
\DeclareMathOperator{\dist}{dist} %
\DeclareMathOperator{\supp}{supp} %
\newcommand{\prt}{\ensuremath{\partial}}
\newcommand{\pr}[1]{\ensuremath{\left( #1 \right) }}
\newcommand{\set}[1]{\ensuremath{\left\{ #1 \right\}}}
\newcommand{\norm}[1]{\ensuremath{\left\Vert #1 \right\Vert}}
\newcommand{\smallnorm}[1]{\ensuremath{\Vert #1 \Vert}}
\newcommand{\refS}[1]{Section~\ref{S:#1}}
\newcommand{\refT}[1]{Theorem~\ref{T:#1}}
\newcommand{\refL}[1]{Lemma~\ref{L:#1}}
\newcommand{\refP}[1]{Proposition~\ref{P:#1}}
\newcommand{\refD}[1]{Definition~\ref{D:#1}}
\newcommand{\refC}[1]{Corollary~\ref{C:#1}}
\newcommand{\refE}[1]{~(\ref{e:#1})}
\newcommand{\eps}{\ensuremath{\epsilon}}
\newcommand{\Cal}[1]{\ensuremath{\mathcal{#1}}}
\newcommand{\al}{\ensuremath{\alpha}}
\newcommand{\diff}[2]{\frac{ d#1}{d#2}}
\newcommand{\ol}{\overline}
\newcommand{\T}{\ensuremath{\Cal{T}}}
\newcommand{\W}{\ensuremath{\Cal{W}}}
\begin{document}

\raggedbottom

\numberwithin{equation}{section}

%
%
\newcommand{\MarginNote}[1]{
    \ifthenelse{\value{DetailLevel}=\value{dtlMarginComments} \or
            \value{DetailLevel}>\value{dtlMarginComments}} {
        \marginpar{
            \begin{flushleft}
                \footnotesize #1
            \end{flushleft}
            }
        }
        {}
    }

%
%
\newcommand{\NoteToSelf}[1]{
    }

%
%
\newcommand{\Obsolete}[1]{
    }

%
%
\newcommand{\Tentative}[1]{
    }

\newcommand{\Detail}[1]{
    \MarginNote{Detail}
    \skipline
    \hspace{+0.25in}\fbox{\parbox{4.25in}{\small #1}}
    \skipline
    }

\newcommand{\Todo}[1]{
    \skipline \noindent \textbf{TODO}:
    #1
    \skipline
    }

\newcommand{\Comment}[1] {
    \skipline
    \hspace{+0.25in}\fbox{\parbox{4.25in}{\small \textbf{Comment}: #1}}
    \skipline
    }

%
%

\newcommand{\IntTR}
    {\int_{t_0}^{t_1} \int_{\R^d}}

\newcommand{\IntAll}
    {\int_{-\iny}^\iny}

\newcommand{\Schwartz}
    {\ensuremath \Cal{S}}

\newcommand{\SchwartzR}
    {\ensuremath \Schwartz (\R)}

\newcommand{\SchwartzRd}
    {\ensuremath \Schwartz (\R^d)}

\newcommand{\SchwartzDual}
    {\ensuremath \Cal{S}'}

\newcommand{\SchwartzRDual}
    {\ensuremath \Schwartz' (\R)}

\newcommand{\SchwartzRdDual}
    {\ensuremath \Schwartz' (\R^d)}

\newcommand{\HSNorm}[1]
    {\norm{#1}_{H^s(\R^2)}}

\newcommand{\HSNormA}[2]
    {\norm{#1}_{H^{#2}(\R^2)}}

\newcommand{\Holder}
    {H\"{o}lder }

\newcommand{\Holders}
    {H\"{o}lder's }

\newcommand{\Holderian}
    {H\"{o}lderian }

\newcommand{\HolderRNorm}[1]
    {\widetilde{\Vert}{#1}\Vert_r}

\newcommand{\LInfNorm}[1]
    {\norm{#1}_{L^\iny(\Omega)}}

\newcommand{\SmallLInfNorm}[1]
    {\smallnorm{#1}_{L^\iny}}

\newcommand{\LOneNorm}[1]
    {\norm{#1}_{L^1}}

\newcommand{\SmallLOneNorm}[1]
    {\smallnorm{#1}_{L^1}}

\newcommand{\LTwoNorm}[1]
    {\norm{#1}_{L^2(\Omega)}}

\newcommand{\SmallLTwoNorm}[1]
    {\smallnorm{#1}_{L^2}}

\newcommand{\LpNorm}[2]
    {\norm{#1}_{L^{#2}}}

\newcommand{\SmallLpNorm}[2]
    {\smallnorm{#1}_{L^{#2}}}

\newcommand{\lOneNorm}[1]
    {\norm{#1}_{l^1}}

\newcommand{\lTwoNorm}[1]
    {\norm{#1}_{l^2}}

\newcommand{\MsrNorm}[1]
    {\norm{#1}_{\Cal{M}}}

\newcommand{\FTF}
    {\Cal{F}}

\newcommand{\FTR}
    {\Cal{F}^{-1}}

\newcommand{\InvLaplacian}
    {\ensuremath{\widetilde{\Delta}^{-1}}}

\newcommand{\EqDef}
    {\hspace{0.2em}={\hspace{-1.2em}\raisebox{1.2ex}{\scriptsize def}}\hspace{0.2em}}

%
%

%
%

\title
    [Vanishing viscosity for an expanding domain]
    {Vanishing viscosity limit for an expanding domain in space}

\author[Kelliher]{James P. Kelliher}
\address{Department of Mathematics, Brown University, Box 1917, Providence, RI
         02912}
\curraddr{Department of Mathematics, Brown University, Box 1917, Providence, RI
          02912}
\email{kelliher@math.brown.edu}

\author[Lopes Filho]{Milton C. Lopes Filho}
\address{IMECC, Caixa Postal 6065, University of Campinas -- UNICAMP, 13083-970, Campinas, SP, Brazil}
\curraddr{}
\email{mlopes@ime.unicamp.br}

\author[Nussenzveig Lopes]{Helena J. Nussenzveig Lopes}
\address{IMECC, Caixa Postal 6065, University of Campinas -- UNICAMP, 13083-970, Campinas, SP, Brazil}
\curraddr{} \email{hlopes@ime.unicamp.br}


\subjclass[2000]{Primary 76D05 ; Secondary 35Q30, 35Q35}

\keywords{Inviscid limit, Vanishing viscosity limit, Navier-Stokes equations,
Euler equations}

\date{} 


\begin{abstract}
We study the limiting behavior of viscous incompressible flows 
when the fluid domain is allowed to expand as the viscosity vanishes. We describe
precise conditions under which the limiting flow satisfies the full space Euler 
equations. The argument is based on truncation and on energy estimates, following
the structure of the proof of Kato's criterion for the vanishing viscosity limit.
This work complements previous work by the authors, see \cite{FLI2007,K2005UBD}.
\end{abstract}

\maketitle

%

\tableofcontents

%
%
\section{Introduction}\label{S:Introduction}

\noindent In \cite{FLI2007}, the second and third authors, in
collaboration with Drago\c{s} Iftimie, showed that, if an obstacle is
scaled by a factor $\eps$, then in the limit as viscosity vanishes the solutions to
the Navier-Stokes equations external to the obstacle converge strongly in
$L^\iny([0, T]; L^2)$ to a solution to the Euler equations in the whole space,
as long as $\eps < a \nu$ for a specific constant $a$. They also give the rate
of convergence in terms of $\nu$ and $\eps$.

In \cite{K2005UBD}, the first author considered the complementary problem of large domain
asymptotics, studying convergence to full plane flow of solutions of Euler or Navier-Stokes
in a large domain. The present article is a natural continuation of both \cite{FLI2007} 
and \cite{K2005UBD}. 

For a domain with boundary, it a classical open problem whether solutions of the Navier-Stokes
equations converge to solutions of the Euler equations when viscosity vanishes.
In \cite{FLI2007} the authors are considering two limits simultaneously: the
vanishing viscosity limit and the limit as the obstacle shrinks to a point,
solving the external problem for the Navier-Stokes equations. This means studying the way in which
a small boundary obstructs the vanishing viscosity convergence. Here, we consider what happens as a bounded domain expands by a factor $R$ to fill the whole space, giving the convergence rate in the vanishing viscosity limit for the internal problem in terms of $\nu$ and $R$. In the same spirit as \cite{FLI2007}, the present work regards 
the effect of distant boundaries in the vanishing viscosity limit.

More precisely, let $\Omega$ be a simply connected bounded domain in $\R^d$, $d = 2$ or $3$, with $C^2$-boundary $\Gamma$ and let $\Omega_R = R \Omega$ and $\Gamma_R = R \Gamma = \partial \Omega_R$, where we assume that the 
origin lies inside $\Omega$. 

A classical solution $(u, p)$
to the Euler equations without forcing in all of $\R^d$ satisfies
\begin{align*}
    \begin{matrix}
        (E) & \left\{
            \begin{array}{ll}
                \prt_t u + u \cdot \grad u + \grad p = 0
                    & \text{in } (0, T) \times \R^d, \\
                \dv u = 0
                    & \text{in } [0, T] \times \R^d, \\
                u = u_0
                    & \text{on } \set{0} \times \R^d,
            \end{array}
            \right.
    \end{matrix}
\end{align*}
where $\dv u^0 = 0$. A classical solution $(u^{\nu, R}, p^{\nu, R})$ to the
Navier-Stokes equations without forcing on $\Omega_R$ satisfies
\begin{align*}
    \begin{matrix}
        (NS) & \left\{
            \begin{array}{ll}
                \prt_t u^{\nu, R} + u^{\nu, R} \cdot \grad u^{\nu, R} + \grad p^{\nu, R}
                     = \nu \Delta u^{\nu, R}
                        & \text{in } (0, T) \times \Omega_R, \\
                \dv u^{\nu, R} = 0
                        & \text{in } [0, T] \times \Omega_R, \\
                u^{\nu, R} = 0
                    & \text {on } [0, T] \times \Gamma_R, \\
                u^{\nu, R} = u^{\nu, R}_0
                    & \text{on } \set{0} \times \Omega_R,
            \end{array}
            \right.
    \end{matrix}
\end{align*}
where $u^{\nu, R}_0 = 0$ on $\Gamma_R$. 

We will work, however, with weak solutions to the Navier-Stokes equations
(to avoid having to deal with the dependence of the time of existence of 
solutions on viscosity).

We consider the classical functions spaces,
\begin{align*}
    V(\Omega_R) &= \set{u \in H^1(\Omega_R) \colon
        \dv u = 0 \text{ in } \Omega_R \text{ and }
        u = 0 \text{ on } \Gamma_R}, \\
    H(\Omega_R) &= \set{u \in L^2(\Omega_R) \colon
        \dv u = 0 \text{ in } \Omega_R \text{ and }
        u \cdot \mathbf{n} = 0 \text{ on } \Gamma_R},
\end{align*}
where $\mathbf{n}$ is the outward directed unit normal vector field to $\Gamma_R$.

The spaces $V(\R^d)$ and $H(\R^d)$ are analogously defined. 

We define the space
\begin{align*}
    V_C(\R^d) &= \set{u \in V(\R^d) \colon
        \supp (\mbox{ curl } u) \text{ is compact}}.
\end{align*}
We will use the notation $\omega(u) \equiv \mbox{curl u}$ for the vorticity associated to a given velocity $u$.

In dimension two the condition $u \in V_C(\R^2)$  requires that the total mass of the vorticity be zero, see  Section 3.1.3 of \cite{MB2001} for a discussion. Hence, if we want to allow vorticities with distinguished sign we must allow for infinite
energy. To this end we recall the affine spaces $E_m$, introduced by J.-Y. Chemin in \cite{C1998}, following a 
construction by R. DiPerna and A. Majda, see \cite{DM1987}. We say that 
$u \in E_m$ if $u = v + \sigma$ for some $v \in H(\R^2)$ and for some stationary solution $\sigma$ of the Euler equations whose vorticity is smooth, compactly supported and has integral $m$. More precisely, for $\sigma$ given by  
\begin{equation} \label{sigma}
\sigma = \sigma(x) = \frac{x^{\perp}}{|x|^2} \int_0^{|x|} s\varphi(s)\,ds,
\end{equation}
for some $\varphi \in C^{\infty}_c(\real_+)$ and $2 \pi \int \varphi(s) s \, ds = m$. Given the arbitrariness in the choice of $\varphi$ we will assume, without loss of generality, that $\varphi$ is of distinguished sign. Above we used the notation $x^{\perp} = (-x_2,x_1)$ if 
$x = (x_1,x_2)$. Notice that $\omega(\sigma)(x) = \varphi(|x|)$.

The classical well-posedness results for weak solutions to ($E$) for $u_0$ in subspaces of $E_0 = H(\R^2)$ remain true when $E_0$ is replaced by $E_m$; see, for instance, Theorem 5.1.1 p. 85 of \cite{C1998} (Yudovich's theorem). In particular, fixing a value of $T > 0$, if $u_0$ is in $E_m \cap \dot{H}^1(\R^2)$ with compactly supported initial vorticity then the solution $u$ to ($E$) will lie in $C([0, T]; E_m) \cap L^\iny([0, T]; \dot{H}^1(\R^2))$. 

Throughout this paper we will assume that the initial velocity $u_0$ for solutions to ($E$) lies in $C^s(\R^d)$ for $s > 1$ so that a unique solution $u$ to the Euler equations ($E$) with initial velocity $u_0$ exists in the space $C^s([0, T] \times \R^d)$ for all $T < T^*$; see, for instance, Theorem 4.2.1 p. 77 of \cite{C1998} (or see \refT{EomegaBounds}, below). The time $T^*$ can be assumed to be arbitrary in two dimensions, where we also assume that $u_0$ lies in $E_m$ (see Theorem 4.2.4 p. 82 of \cite{C1998}), but only finite time existence is known in three dimensions. We assume that the initial vorticity is compactly supported with its support contained in a ball of radius $R_0$ and define
\begin{align*}
	R(T) = \inf_{r \ge 0} \set{r \colon \supp \omega(u) \subseteq [0, T] \times B_r(0)}.
\end{align*}
That $R(T)$ is finite follows in two dimensions from the transport of vorticity by the flow associated to $u$, $u$ being bounded uniformly over finite time. But $R(T)$ is also finite in three dimension, as we show in \refT{EomegaBounds}.
\begin{definition}[Classes of initial velocities]\label{D:Initu0} Let $s > 1$. 
    We treat the following three classes of initial velocities:
        \begin{enumerate}
            \item[I.]
                $u_0$ is in $C^s(\R^2) \cap V_C(\R^2)$,      

            \item[II.]
                $u_0$ is in $C^s(\R^2) \cap E_m \cap \dot{H}^1(\R^2)$, the support of $\omega(u_0)$ is compact,
                and $\Omega_1$ is a disk,

            \item[III.]
                $u_0$ is in $C^s(\R^3) \cap V_C(\R^3)$.
        \end{enumerate}
\end{definition}

We assume that the initial velocity $u^{\nu, R}_0$ is in $H(\Omega_R)$. For
such initial velocities it is a classical result of Leray that there exists a
weak solution $u^{\nu, R}$ to the Navier-Stokes equations ($NS$); in two dimensions
this solution is unique, a result due to Ladyzhenskaya. In three
dimensions, global-in-time existence is known, but not uniqueness, so we arbitrarily choose one such solution for each value of $\nu$. 

Our main result is the following:
\begin{theorem}\label{T:MainResult} Let $u_0$ be in one of the three classes of initial velocities in Definition \ref{D:Initu0} and set  
 $F(\nu,R) \equiv \smallnorm{u^{\nu, R}_0 - u_0}_{L^2(\Omega_R)}$.
    For all $T < T^*$ there exists a constant $C = C(s, T, \Omega, u_0) > 0$ such
    that
\begin{enumerate}
\item if $s > 1$,
    \begin{align*}
        \smallnorm{u^{\nu, R} - u}_{L^\iny([0, T]; L^2(\Omega_R))}
            \le \pr{C (\nu^{1/2} + R^{-\al}) + F(\nu,R)} e^{CT};
    \end{align*}
\item if $s \ge 2$,
    \begin{align*}
        \smallnorm{u^{\nu, R} - u}_{L^\iny([0, T]; L^2(\Omega_R))}
            \le \pr{C (\nu + R^{-\al}) + F(\nu,R)} e^{CT},
    \end{align*}
 \end{enumerate}   
for all sufficiently large $R$. 

The exponent $\al$ is 
    defined for each of the three cases as follows:
        \begin{enumerate}
            \item[I.]
               $\al = 1$,

            \item[II.]
                $\al = 1/3$,

            \item[III.]
                $\al = 1/2$.
        \end{enumerate}
\end{theorem}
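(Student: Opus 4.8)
The plan is to follow the structure of the proof of Kato's criterion, as adapted to the exterior problem in \cite{FLI2007}. Since $u$ does not vanish on $\Gamma_R$ it is not an admissible comparison field for $(NS)$, so the idea is to compare $u^{\nu, R}$ with $u - z$, where $z = z^{\nu, R}$ is a divergence-free ``boundary corrector'' supported in a thin layer near $\Gamma_R$ and equal to $u$ on $\Gamma_R$; then $w := u^{\nu, R} - u + z$ lies in $V(\Omega_R)$, one derives a differential inequality for $\tfrac12\tfrac{d}{dt}\smallnorm{w}_{L^2(\Omega_R)}^2$, and closes with Gronwall's inequality.

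I would first record the facts needed about the Euler solution $u$ (see \refT{EomegaBounds}): that $u \in C^s([0,T]\times\R^d)$, that $\omega(u)(t,\cdot)$ is supported in $B_{R(T)}$ for $t \le T$, and --- since $u$ is curl-free and divergence-free outside $B_{R(T)}$ --- that $u$, $\grad u$, and (via $(E)$) $\prt_t u$ decay at spatial infinity at the rate dictated by the Biot--Savart law for the class at hand. In Cases~I and III one has $|u(t,x)| \le C |x|^{-(d-1)}$, and in fact $|x|^{-2}$ in Case~I, where the total vorticity vanishes; in Case~II one writes $u = \sigma + v$, with $v$ the finite-energy part (decaying like $|x|^{-2}$) and $\sigma$ the explicit stationary flow \eqref{sigma}, which on the disk $\Gamma_R$ is purely tangential of size $m/(2\pi R)$ and satisfies $\sigma \cdot \mathbf{n} = 0$. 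These decay rates, together with $|\Gamma_R|$ being of order $R^{d-1}$, are what produce the $R^{-\al}$ contributions.

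Next I would construct $z^{\nu,R}$ by cutting off the stream function (in $d=2$) or a vector potential (in $d=3$) of $u$ to a layer $\smallset{x \in \Omega_R \colon \dist(x,\Gamma_R) < \delta}$ of width $\delta$, a parameter to be optimized, arranged so that $z$ is divergence-free and agrees with $u$ on $\Gamma_R$; in Case~II the tangential slip of $\sigma$ is removed by a separate, purely radial corrector, and $v$ is treated as in Case~I. This yields bounds on $\smallnorm{z}_{L^\iny(L^2)}$, $\smallnorm{z}_{L^\iny(L^\iny)}$, $\smallnorm{\grad z}_{L^\iny(L^2)}$ and $\smallnorm{\prt_t z}_{L^1(L^2)}$ in terms of $\nu$, $R$, $\delta$. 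Testing the weak formulation of $(NS)$ against $w$ and using $(E)$ for $u$, the pressure terms drop (because $\dv w = 0$ and $w$ vanishes on $\Gamma_R$) and the remaining terms are: the dissipation $-\nu\smallnorm{\grad u^{\nu,R}}_{L^2}^2$; the viscous cross-terms $\nu\int_{\Omega_R}\grad u^{\nu,R} : \grad u$ and $-\nu\int_{\Omega_R}\grad u^{\nu,R} : \grad z$, the first of which, after Young's inequality, leaves the source $\tfrac{\nu}{2}\smallnorm{\grad u}_{L^2(\Omega_R)}^2$ --- of order $\nu$, hence responsible for the $\nu^{1/2}$ rate when $1 < s < 2$, while when $s \ge 2$ the additional regularity ($\Delta u \in L^2$, being supported in $B_{R(T)}$) permits a sharper estimate of this cross-term and the $\nu$ rate --- whereas the second term is the genuine Kato term, of order $\nu\smallnorm{\grad z}_{L^2}^2$; the nonlinear terms, which after integration by parts reduce to $C\smallnorm{w}^2$ (through $\smallnorm{\grad u}_{L^\iny}$) plus ``corrector'' contributions such as $\int_{\Omega_R}(w\cdot\grad z)\cdot w$ and $\int_{\Omega_R}(u\cdot\grad w)\cdot z$, controlled by the Hardy inequality (since $w$ and $u^{\nu,R}$ vanish on $\Gamma_R$), Young's inequality, and --- in $d=2$ --- Ladyzhenskaya's inequality; and the corrector's time derivative, bounded by $\smallnorm{\prt_t z}_{L^1(L^2)}\smallnorm{w}_{L^\iny(L^2)}$. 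Absorbing the terms proportional to $\nu\smallnorm{\grad u^{\nu,R}}_{L^2}^2$ into the dissipation (which constrains the admissible range of $\delta$), applying Gronwall's inequality, optimizing $\delta$, and converting back via $\smallnorm{u^{\nu,R} - u}_{L^2} \le \smallnorm{w}_{L^2} + \smallnorm{z}_{L^2}$ and $\smallnorm{w(0)}_{L^2} \le F(\nu,R) + \smallnorm{z(0)}_{L^2}$ delivers the stated inequalities, with $C = C(s,T,\Omega,u_0)$.

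The chief difficulty is the tension in the choice of $\delta$: it must be small enough that the Hardy gain renders the nonlinear corrector terms absorbable by $\nu\smallnorm{\grad u^{\nu,R}}_{L^2}^2$, yet large enough that $\nu\smallnorm{\grad z}_{L^2}^2$, $\smallnorm{z}_{L^2}$ and $\smallnorm{\prt_t z}_{L^1(L^2)}$ stay within the target orders $R^{-2\al}$ and $R^{-\al}$. Tracking this balance against the decay rates above is what fixes $\al$ in each case: in $d=3$ (Case~III) the loss of Ladyzhenskaya's inequality forces a less efficient treatment of the nonlinear terms and degrades $\al$ to $1/2$, while in Case~II the slow $|x|^{-1}$ decay and infinite energy of $\sigma$ --- which must be carried through the estimate as a fixed background rather than corrected away --- degrade $\al$ to $1/3$. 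A final technical point is that in $d=3$, $u^{\nu,R}$ is only a Leray weak solution, so the computation must be carried out at the level of the energy inequality, and the terms above interpreted accordingly.
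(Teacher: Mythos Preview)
Your corrector --- a stream-function (2D) or vector-potential (3D) cutoff in a layer near $\Gamma_R$ --- is exactly the paper's $u^R = \T_R u$, so $z = u - u^R$ and your $w$ is the paper's $W$. The structure is right, but the analysis you sketch is lifted from the thin-layer setting of \cite{Kato1983} and \cite{FLI2007}, and that is a misdirection here. The decisive feature of the expanding-domain problem is that the layer is \emph{thick}, of width $\delta_1 R^\theta$ with $\theta\in[0,1]$ (and $\theta=1$ in 3D), so $\|\grad\varphi^R\|_{L^\infty}\le CR^{-\theta}\to 0$ and the corrector's gradient is \emph{small}. There is then no Kato tension: no Hardy inequality, no Ladyzhenskaya, and nothing to absorb into the dissipation beyond the single harmless cross term $\nu\int\grad W\cdot\grad u^R$. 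The nonlinear contribution is bounded directly by $C(R^{-\al}+\|W\|_{L^2})\|W\|_{L^2}$ using only $\|\grad u^R\|_{L^\infty}\le C$ and $\|\varphi^R u - u^R\|_{L^2}\le CR^{-\al}$; the terms carrying $\grad\varphi^R$ contribute $CR^{-\theta}\|W\|_{L^2}$. The optimization is over $\theta$, not over a small $\delta$ coupled to $\nu$.

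Your explanations for $\al$ are correspondingly off. In 3D, $\al=1/2$ has nothing to do with the loss of Ladyzhenskaya: with $\theta=1$ one simply has $|u(x)|\le C|x|^{-2}$ and $|\Sigma_R|\sim R^3$, hence $\|u\|_{L^2(\Sigma_R)}\le CR^{-1/2}$, while the stream-function contribution is lower order. In Case~II the obstruction is the logarithmic growth of $\psi_\sigma$; the disk hypothesis lets one normalize $\psi_\sigma=0$ on $\Gamma_R$, after which Poincar\'e on the layer of width $R^\theta$ gives $\|\psi_\sigma\|_{L^2(\Sigma_R)}\le CR^{3\theta/2-1/2}$ and hence $\|u-u^R\|_{L^2}\le CR^{\theta/2-1/2}$. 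Balancing this against $R^{-\theta}$ forces $\theta=\al=1/3$. The disk assumption is essential to this step (see the remark following \refP{Truncate2D}); treating $\sigma$ as a fixed background and correcting only its tangential slip will not reproduce this balance for the stream-function term that actually dominates.
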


Of particular interest is when we define $u^{\nu, R}_0$, independently of $\nu$, to be that unique divergence-free vector field tangent to the boundary of $\Omega_R$ whose vorticity on $\Omega_R$ is the same as that of $u_0$. In Section \ref{S:u0Truncation} such a vector field is denoted $u_0^{\nu,R}=\W_Ru_0$. We will see in Corollary \ref{C:WRProperties} that,   
\begin{align}\label{e:u0WDecay}
\smallnorm{u^{\nu, R}_0 - u_0}_{L^2(\Omega_R)}  =  F(\nu,R) \equiv F(R)  
        \le C R^{-\al}
\end{align}
for all $R \ge 2 R_0$, with $\al$ defined as in \refT{MainResult}. 
In this case, the term $F(R)$ in the bounds in \refT{MainResult} is dominated by the other term and so, in effect, it disappears.

It follows immediately from \refT{MainResult} that, as long as $R = R(\nu) \to
\iny$ as $\nu \to 0$ and $F(\nu,R) \to 0 $ as $R \to \iny$, $\smallnorm{u^{\nu, R(\nu)} - u}_{L^\iny([0, T];
L^2(\Omega_{R(\nu)}))} \to 0$ as $\nu \to 0$. 

It was shown in \cite{K2005UBD} for Case I that if $u^R$ is the
solution to the Euler equations on $\Omega_R$ with initial velocity $\T_R u_0$ then
$\smallnorm{u^R - u}_{L^\iny([0, T]; L^2(\Omega_{R(\nu)}))} \to 0$ as $R
\to \iny$. Here, $\T_R$ is a truncation operator, which will be defined precisely in \refS{uR}, see \eqref{e:TRu}. This result is extended in \cite{KStatSol2008} to cover Case II and to use the projector $P_{V(\Omega_R)}$---restriction to $\Omega_R$ followed by projection into $V(\Omega_R)$---in place of $\T_R$.
This gives the following corollary:
\begin{cor}\label{C:MainCor}
    Let $T < T^*$ and set $u^{\nu, R}_0 = P_{V(\Omega_R)} u_0$. Then, for $u_0$ as in Cases I or II,
    \begin{align*}
        \smallnorm{u^{\nu, R(\nu)} - u^{R(\nu)}}_{L^\iny([0, T]; L^2(\Omega_{R(\nu)}))}
            \to 0 \text{ as } \nu \to 0
    \end{align*}
    as long as $R = R(\nu) \to \iny$ as $\nu \to 0$.
\end{cor}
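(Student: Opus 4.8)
The plan is to combine the main theorem with the known inviscid-limit results cited just before the corollary, using the triangle inequality. Write
\begin{align*}
    \smallnorm{u^{\nu, R} - u^{R}}_{L^\iny([0, T]; L^2(\Omega_{R}))}
        \le \smallnorm{u^{\nu, R} - u}_{L^\iny([0, T]; L^2(\Omega_{R}))}
            + \smallnorm{u - u^{R}}_{L^\iny([0, T]; L^2(\Omega_{R}))},
\end{align*}
where on the right-hand side $u$ is the solution to the full-space Euler equations $(E)$ with initial velocity $u_0$, and $u^R$ is the Euler solution on $\Omega_R$ with initial velocity $P_{V(\Omega_R)} u_0$ (for Case~I, $\T_R u_0$, but the extension in \cite{KStatSol2008} covers the projector and Case~II as well). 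The first term is controlled by \refT{MainResult}: with $u^{\nu,R}_0 = P_{V(\Omega_R)} u_0$ we have $F(\nu, R) = \smallnorm{P_{V(\Omega_R)} u_0 - u_0}_{L^2(\Omega_R)}$, which by \eqref{e:u0WDecay} (and the remark identifying $P_{V(\Omega_R)}$ with $\W_R$ up to the stated decay) is $\le C R^{-\al}$; hence the first term is bounded by $(C(\nu^{1/2} + R^{-\al}) + C R^{-\al})e^{CT} \le C' (\nu^{1/2} + R^{-\al}) e^{CT}$ in Case~I, and similarly $C'(\nu^{1/2}+R^{-1/3})e^{CT}$ in Case~II. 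The second term tends to $0$ as $R \to \iny$ by the results of \cite{K2005UBD} (Case~I) and \cite{KStatSol2008} (Case~II, and the use of $P_{V(\Omega_R)}$).

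Now substitute $R = R(\nu)$ with $R(\nu) \to \iny$ as $\nu \to 0$. The first term is bounded by $C'(\nu^{1/2} + R(\nu)^{-\al}) e^{CT}$, and since both $\nu^{1/2} \to 0$ and $R(\nu)^{-\al} \to 0$, this term vanishes in the limit. The second term is $\smallnorm{u - u^{R(\nu)}}_{L^\iny([0,T]; L^2(\Omega_{R(\nu)}))}$, which vanishes as $\nu \to 0$ because $R(\nu) \to \iny$ and the cited inviscid-large-domain convergence holds along any sequence of radii going to infinity. Adding the two, $\smallnorm{u^{\nu, R(\nu)} - u^{R(\nu)}}_{L^\iny([0, T]; L^2(\Omega_{R(\nu)}))} \to 0$, which is the claim.

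The only subtlety — and the step I would be most careful about — is making sure the norms are over a consistent domain. Here $u^R$ and $u^{\nu,R}$ both live on $\Omega_R$, while $u$ lives on all of $\R^d$; the comparison $\smallnorm{u - u^R}_{L^2(\Omega_R)}$ should be read as restriction of $u$ to $\Omega_R$, and one must check that the convergence statements quoted from \cite{K2005UBD, KStatSol2008} are indeed phrased with this restriction (they are, as indicated in the paragraph preceding the corollary). A second point to verify is that the decay rate for $F(\nu,R)$ supplied by \eqref{e:u0WDecay} uses the operator $\W_R$, whereas the corollary uses $P_{V(\Omega_R)}$; the discussion around \eqref{e:u0WDecay} and Corollary~\ref{C:WRProperties} asserts the same $R^{-\al}$ bound holds for $P_{V(\Omega_R)}$ as well (this is exactly the content of the extension in \cite{KStatSol2008}), so $F(\nu,R) \le C R^{-\al}$ remains valid and gets absorbed into the $R^{-\al}$ term of \refT{MainResult}. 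No new estimates are needed — this is purely an assembly of \refT{MainResult} with the two cited convergence results.
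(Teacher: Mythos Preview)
Your proposal is correct and matches the paper's approach exactly: the paper does not give a separate formal proof of the corollary but derives it in the paragraph immediately preceding its statement, by combining \refT{MainResult} (which gives $\smallnorm{u^{\nu,R(\nu)} - u} \to 0$) with the cited results from \cite{K2005UBD} and \cite{KStatSol2008} (which give $\smallnorm{u^{R} - u} \to 0$), via the triangle inequality. One small clarification: \refL{WEqualsPH} and \refC{WRProperties} identify $\W_R$ with $P_{H(\Omega_R)}$, not $P_{V(\Omega_R)}$; the cleanest way to get $F(\nu,R) \le C R^{-\al}$ for $u_0^{\nu,R} = P_{V(\Omega_R)} u_0$ is to note that $\T_R u_0 \in V(\Omega_R)$ and use the minimizing property of the orthogonal projection together with \refE{uTruL2Bound}.
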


The energy argument in our proof of \refT{MainResult} follows fairly closely the argument in \cite{FLI2007}, which itself is closely connected to Kato's argument in \cite{Kato1983}.
We can describe in a unified way the approach of all three papers---\cite{Kato1983}, \cite{FLI2007}, and this one---as follows. Let $u^{NS}$ be the solution to ($NS$) in a domain $\Omega$ and let $u^E$ be the solution to ($E$) either in the whole space or, as in \cite{Kato1983}, in $\Omega$ itself. In \cite{Kato1983}, $\Omega$ is a fixed bounded domain; in \cite{FLI2007}, $\Omega$ is an external domain which is scaled to a point by a parameter $\eps$; for us, $\Omega$ is a bounded domain which is scaled by a parameter $R$ to fill the whole space.

Define a correction velocity $u^C$ to $u^E$ such that $u^C = u^E$ on $\prt
\Omega$ and is equal to zero outside a boundary layer $\Gamma_\delta$ of width
$\delta$. In \cite{Kato1983}, $\delta = C \nu$; in \cite{FLI2007}, $\delta = 
\eps$; in this paper, $\delta = C R^\al$. Let $u^A = u^E - u^C$ be an
``approximate solution'' to ($E$), and observe that $u^A = 0$ on $\prt \Omega$.

The goal is to bound the norm of $u^{NS} - u^E = u^{NS} - u^A - u^C$ in the
space $X = L^\iny([0, T]; L^2(\Omega))$. To do so, one first shows that
$\smallnorm{u^C}_X \to 0$ as $\delta \to 0$ or $\iny$ as the case may be. Then
one bounds $W = u^{NS} - u^A$ in $X$ by making an energy argument, the nature
of the argument differing in each case. Because $u^A = 0$ on $\prt \Omega$, no
troublesome boundary terms appear, though certain other terms appear because
$u^A$ is only an approximate solution to ($E$).

Kato's energy argument in \cite{Kato1983} is designed to estimate all of the
uncontrollable terms by the quantity
\begin{align}\label{e:KatosQuantity}
    \nu \int_0^T \norm{\grad u^{NS}}_{L^2(\Gamma_{C \nu})}^2,
\end{align}
which, by the most basic energy argument for solutions to
($NS$), is bounded uniformly for all $T$ and must vanish if the vanishing
viscosity limit is to hold. Kato's innovation is to show that the vanishing of this term is sufficient for the vanishing viscosity limit to hold.

\Ignore{
	The energy argument in \cite{FLI2007} employs an integration by parts that
	places all of the derivatives on $W$, $u^E$, or $u^A$, all of which can be
	controlled as long as the boundary shrinks sufficiently rapidly. The critical term 
	is $C \delta^{-1} \norm{W}_{L^2(A_\delta)}^2$, where $A_\delta$ is a bounded domain 
	that scales
	like $\delta$. An application of Poincare's inequality introduces a factor of
	$\delta^2$ which can be controlled as long as $\delta$ is proportional to
	$\nu$. In this paper, factors of $1/\delta = C R^{-\al}$ are advantageous, which
	simplifies the energy argument and allows convergence with no restrictions on
	the rate at which $R \to \iny$ as $\nu \to 0$.
	}

The results achieved in the three papers differ most fundamentally because for
Kato $\grad u^C$ scales like $1/\nu$, which is detrimental (but unavoidable),
introducing terms into the energy argument that cannot quite be controlled. For
us, $\grad u^C$ scales like $R^{-\al}$ which allows us to control all of these
terms. In \cite{FLI2007}, $\grad u^C$ scales like $1/\eps$, but the domain
shrinks in area like $\eps^2$, which largely counteracts the detrimental
effects of $\grad u^C$.

The research presented here is part of a series of papers aimed at studying asymptotic behavior of 
incompressible flows under singular domain perturbations. The first result in this line of research concerned 
ideal 2D flow in the exterior of a small obstacle, see \cite{FLI2003}, followed by a study of viscous 2D flow in the same
limit, see \cite{FLI2006}. Beyond these, this research has included ideal 2D flows in bounded domains with multiple holes one of which vanishes, see 
\cite{Lopes2006}, ideal or viscous 2D flow in a large domain, see \cite{K2005UBD}, 3D viscous flow in the exterior of a small obstacle, \cite{IK2008} and, most recently,  2D flow exterior to a smooth obstacle approaching a segment of a curve, see \cite{Lacave2008a} for the ideal flow case and \cite{Lacave2008b} for the viscous case. The classical open 
problem of vanishing viscosity in the presence of boundaries motivated the coupling of singularly perturbed domain problems with vanishing viscosity, specifically when the boundary disappears as viscosity vanishes. The first result in this direction was obtained in \cite{FLI2007} for the small 
obstacle limit and the current work can be regarded as a natural continuation of \cite{K2005UBD} in the same spirit.

This paper is organized as follows: Section 2 contains certain notation we use and conventions we follow. In \refS{uR} we describe an approximate solution $u^R$ to the Euler equations on $\Omega_R$ which we use in \refS{uProofOfMainResult} to prove \refT{MainResult}. The proof of \refT{MainResult} relies, however, on a long series of estimates involving $u^R$, which require us to understand how to take a divergence-free vector field defined in the whole plane or space and ``truncate'' it in such a way that it is unchanged in the central part of the domain $\Omega_R$, vanishes on the boundary of $\Omega_R$, and yet differs in the pertinent norms on $\Omega_R$ as little as possible from the original vector field. We describe the two dimensional version of such a truncation operator in \refS{Truncation2D} and use it in \refS{2DEstimates} to define and obtain the necessary estimates on $u^R$.

The definition and analysis of the truncation operator in three dimensions is markedly different from that in two dimensions. In \refS{Decay3D} we derive uniform-in-time bounds on the decay of the velocity and its gradient for a solution to ($E$). We then define the truncation operator in three dimensions in \refS{Truncation3D} and obtain the estimates on $u^R$ in three dimensions in \refS{3DEstimates}. In \refS{u0Truncation} we prove \refE{u0WDecay}. In \refS{Comments} we make some comments and state a couple of open problems.

%
%
\section{Preliminaries}\label{S:Notation}

\noindent The symbol $C$ stands for a positive constant that can hold different
values on either side of an inequality, though always has the same value on
each side of an equality.

For a scalar function $f$ in two dimensions we write $\grad^\perp f := (-\prt_2 f, \prt_1 f)$. In two dimensions we define the vorticity of a vector field $u$ to be the scalar curl, $\omega = \omega(u) := \prt_1 u^2 - \prt_2 u^1 \equiv \nabla^\perp\cdot u$. In three dimensions, we define the vorticity to be $\omega = \omega(u) := \curl u$; that is, $\omega$ is the three-vector,
\begin{align*}
	\omega = (\prt_2 u^3 - \prt_3 u^2, \prt_3 u^1 - \prt_1 u^3, \prt_1 u^2 - \prt_2 u^1).
\end{align*}

 It is sometimes convenient in three dimensions to view the vorticity as the anti-symmetric $3 \times 3$ matrix $\mathbb{A} = 
\mathbb{A}(u)$ whose entry in the $i$-th row, $k$-th column is  $\omega^i_k =  \omega^i_k(u) := (\prt_k u^i - \prt_i u^k)/2$. Thus,
\begin{align*}
	\mathbb{A} = \frac{1}{2}
		\begin{pmatrix}
			0 & -\omega^3 & \omega^2 \\
			\omega^3 & 0 & -\omega^1 \\
			-\omega^2 & \omega^1 & 0
		\end{pmatrix}.
\end{align*}
Observe that the $L^p$-norms of $\mathbb{A}$ and $\omega$ are equivalent, differing only by a multiplicative constant.

Given a divergence-free $C^1$ vector field $u$ on $\real^2$ let $\omega=\omega(u)$ be its vorticity, which we assume to have compact support. We define the associated two-dimensional stream function $\psi$ as 
\begin{equation} \label{2Dstream}
\psi=\psi(x)=\frac{1}{2\pi}\int_{\real^2}\log|x-y| \omega(y)\,dy,
\end{equation}
so that $\Delta \psi = \omega$ and $u = \nabla^{\perp}\psi$. 

Given a divergence-free $C^1$ vector field $u$ on $\real^3$  with compactly supported vorticity $\omega=\omega(u)$ we define the associated three-dimensional (vector-valued) stream function $\Psi$ as 
\begin{equation} \label{3Dstream}
\Psi = \Psi(x) = \frac{1}{4\pi}\int_{\real^3}\frac{1}{|x-y|}\omega(y)\,dy.
\end{equation}
Hence, $-\Delta \Psi = \omega$ and $u = \mbox{ curl } \Psi$, the latter statement following since $\mbox{div }\Psi = 0$, which in turn can be seen from the equation $\Delta \mbox{div } \Psi = 0$, $\mbox{ div }\Psi \to 0$ at $\infty$.

We note in passing that an alternative to this vector-valued stream function is to define the matrix-valued stream function  
\[\psi_{ik} := \frac{1}{2\pi}\int_{\real^3} \frac{1}{|x-y|} \omega^i_k(y)\,dy,\]
which has the property that $u^i = \sum_k \partial_k \psi_{ik}$. The advantage of defining the stream function in this way is that it can be generalized to higher dimensions.

%
%
\section{Approximate solution to the Euler equations}\label{S:uR}

\noindent Define a cutoff function $\varphi^R$ in two dimensions as follows. Fix $\theta$ in $[0, 1]$. (We will ultimately choose a value of $\theta$ that optimizes the convergence rate in \refT{MainResult}.) Let $\delta_1 = 1/2\ol{\kappa}$, where $\ol{\kappa}$ is the maximum curvature of $\Gamma = \partial \Omega$. Let $\Sigma_R$ be a tubular neighborhood of $\Gamma_R$ in $\Omega_R$ of uniform width $\delta_1 R^\theta$ for all $R$ in $[1, \iny)$. (Decrease the value of $\delta_1$ if necessary to insure that the origin is not contained in $\Sigma_R$.) Put coordinates $(s, r)$ on $\Sigma_R$, where $s$ is arc length along $\Gamma$, which locates a point on $\Gamma$, and $r$ is the distance along the inward normal at that point.

Let $g$ in $C^\iny([0, \delta_1])$ taking values in $[0, 1]$ be defined so that
$g(0) = g'(0) = 0$ and $g = 1$ on $\Big[\displaystyle{\frac{\delta_1}{2}}, \delta_1\Big]$. Then define $\varphi^R$
in $C^\iny(\Omega_R)$ by $\varphi^R(s, r) = g(R^{-\theta} r)$ for points $(s, r)$ in $\Sigma_R$, and
$\varphi^R = 1$ on $\Omega_R \setminus \Sigma_R$. Observe that
\begin{align}\label{e:varphiBounds}
    \begin{split}
        &\norm{\grad \varphi^R}_{L^\iny(\Omega_R)} \le C R^{-\theta}, \qquad
            \norm{\grad \grad \varphi^R}_{L^\iny(\Omega_R)} \le C R^{-2 \theta},
   \end{split}
\end{align}
and similarly for higher derivatives of $\varphi^R$, where $C$ is independent of $R$ in $[1, \iny)$, and $\varphi^R = 0$ and $\grad \varphi^R = 0$ on $\Gamma$.

We define $\varphi^R$ in three dimensions more simply. Let
\begin{align*}
    \Sigma = \set{x \in \Omega: \dist(x, \Gamma) < 1/2 \ol{\kappa}},
\end{align*}
where $\ol{\kappa}$ is the maximum of all sectional curvatures over all points of $\Gamma$. Let $\varphi$ in $C^\iny(\Omega)$ taking values in $[0, 1]$ be defined so that $\varphi = 1$ on $\Omega \setminus \Sigma$ and $\varphi = 0$, $\grad \varphi = 0$ on $\Gamma$, and let $\varphi^R(\cdot) = \varphi(\cdot/R)$ and $\Sigma_R = R \Sigma$. Then \refE{varphiBounds} holds with $\theta = 1$.

Let $\psi$ be the two-dimensional stream function associated to the full-plane Euler velocity $u$, as in \eqref{2Dstream}. We define the vector field 
$u^R$ on $\Omega_R$ by
\begin{align}\label{e:TRu}
    u^R = \T_R u
        := \grad^\perp (\varphi^R \psi).
\end{align}
Notice that this defines an operator $\T_R$ whose properties we will explore later.

If $\Psi$ is the three-dimensional stream function associated to the full-space Euler velocity $u$, as in \eqref{3Dstream}, then we define the approximation $u^R$ on $\Omega_R$ by
\begin{align}\label{e:TRu3D}
    u^R = \T_R u
        := \grad \times (\varphi^R \Psi).
\end{align}

The operator $\T_R$ in both cases has the property that $u^R = \T_R u$ lies not just in $H(\Omega_R)$ but in $V(\Omega_R)$, and so vanishes on the boundary. It also satisfies ($E$) in $\Omega_R \setminus \Sigma_R$. In this sense, it is an approximate solution to ($E$). 

Clearly, $u^R$ satisfies the identity
\begin{align}\label{e:uRIdentity2D}
    \prt_t u^R
       &= - \varphi^R u \cdot \grad u
          - \varphi^R \grad p
          + \prt_t \psi \grad^\perp \varphi^R
\end{align}
in two dimensions and  
\begin{align}\label{e:uRIdentity3D}
    \prt_t u^R
       &= - \varphi^R u \cdot \grad u
          - \varphi^R \grad p
          + \grad \varphi^R \times \prt_t \Psi,
\end{align}
in three dimensions.

Next we state a proposition which contains the key estimates on $u^R$ that we will use in \refS{uProofOfMainResult} to prove \refT{MainResult}. 
We prove the two-dimensional case of this proposition in \refS{2DEstimates} and the three-dimensional case in \refS{3DEstimates}.

\begin{prop}\label{P:FLILikeBasicEstimates}
 For all $T < T^*$, for all sufficiently large $R$, we have 
    \begin{enumerate}
        \item[(1)]
            $\smallnorm{\grad u^R}_{L^\iny([0, T]; L^2(\Omega_R))} 
            	\le C$,

        \item[(2)]
            $\smallnorm{u^R}_{L^\iny([0, T] \times \Omega_R)} 
            	\le C$,

        \item[(3)]
            $\smallnorm{\grad u^R}_{L^\iny([0, T] \times \Omega_R)}
            	\le C$,

        \item[(4a)]
            $\smallnorm{p \grad \varphi^R}_{L^\iny([0, T]; L^2(\Omega_R))}
                + \smallnorm{\prt_t \psi \grad \varphi^R}_{L^\iny([0, T]; 
                		L^2(\Omega_R))}$
		
			$\qquad\qquad \le C R^{-\theta}$ in 2D,

        \item[(4b)]
            $\smallnorm{p \grad \varphi^R}_{L^\iny([0, T]; L^2(\Omega_R))}
                + \smallnorm{\grad \varphi^R \times  
                	\prt_t \Psi}_{L^\iny([0, T]; L^2(\Omega_R))}$
	
			$\qquad\qquad \le C R^{-1}$ in 3D,

        \item[(5)]
            $\smallnorm{\Delta u^R}_{L^\iny([0, T]; L^2(\Omega_R))} \le C$ when $s \ge 2,$

        \item[(6)]
            $\smallnorm{u^R - u}_{L^\iny([0, T]; L^2(\Omega_R))}
                + \smallnorm{u^R - \varphi^R u}_{L^\iny([0, T]; L^2(\Omega_R))}
            	\le C R^{-\al}$,

        \item[(7)]
             $\smallnorm{\grad(u - u^R)}_{L^\iny([0, T]; L^2(\Omega))} 
             	\le C R^{-\beta}$.
    \end{enumerate}
Above, $\alpha$ and $\beta$ are given by:	
	\begin{align}\label{e:alDef}
		\al &=
			\left\{
				\begin{array}{ll}
					1/2 + \theta/2 
						& \text{if } m = 0, \\
					1/2 - \theta/2
						& \text{if } m \ne 0 \\
				\end{array}
				\right.
    \end{align}
    and
	\begin{align}\label{e:betaDef}
		\beta &=
			\left\{
				\begin{array}{ll}
					1/2 + 3 \theta/2 
						& \text{if } m = 0, \\
					1/2 + \theta/2
						& \text{if } m \ne 0, \\
				\end{array}
				\right.
    \end{align}
in two dimensions, while  $\al = 1/2$ and $\beta = 3/2$ in three dimensions. 

    For case I of \refD{Initu0} the constants above depend only on $\Omega$;
    for cases II and III some of the constants also depend on $T$.
\end{prop}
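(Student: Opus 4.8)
The plan is to establish all seven estimates by systematically unwinding the definition $u^R = \T_R u = \grad^\perp(\varphi^R \psi)$ (respectively $\grad\times(\varphi^R\Psi)$ in 3D) and exploiting the product rule together with the cutoff bounds \eqref{e:varphiBounds} and the regularity of the Euler solution $u$. The first observation is a decomposition: since $\grad^\perp(\varphi^R\psi) = \varphi^R\grad^\perp\psi + \psi\,\grad^\perp\varphi^R = \varphi^R u + \psi\,\grad^\perp\varphi^R$, the difference $u^R - \varphi^R u$ is supported in $\Sigma_R$ and equals $\psi\,\grad^\perp\varphi^R$ (with the analogous $\Psi\,$-based expression in 3D). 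Thus estimates (4a), (4b), (6), and (7) all reduce to controlling $\psi$ (or $\Psi$, or $\prt_t\psi$, or $p$) in $L^2(\Sigma_R)$ and in $L^\iny$, multiplied by the relevant power of $R^{-\theta}$ coming from the derivatives of $\varphi^R$. The key quantitative input is the growth/decay rate of the stream function on the annular region $\Sigma_R$ (which sits at distance $\sim R$ from the origin and has width $\sim R^\theta$, hence area $\sim R^{1+\theta}$ in 2D): when $m=0$ the Biot–Savart kernel gives $\psi = O(|x|^{-1})$ at infinity, while when $m\ne 0$ one only has the logarithmic growth inherited from $\sigma$, which must be handled by splitting $\psi$ into the $\sigma$-part and an $H(\R^2)$-part; this is exactly where the two cases of the exponents \eqref{e:alDef}, \eqref{e:betaDef} diverge, and I expect the careful bookkeeping of the decay rates of $\psi$, $\prt_t\psi$ and $p$ on $\Sigma_R$ to be the main obstacle. (The decay of $p$ follows from solving $-\Delta p = \sum_{i,j}\prt_i u^j\prt_j u^i$ with compactly supported right-hand side plus the decay of $u$; the decay of $\prt_t\psi$ from $\prt_t\omega$ being compactly supported and transported.)

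Next I would dispatch the ``easy'' estimates (1), (2), (3). For (2) and (3), write $u^R = \varphi^R u + \psi\grad^\perp\varphi^R$: the first term is bounded in $L^\iny$ by $\norm{u}_{L^\iny}$ (finite, since $u\in C^s([0,T]\times\R^d)$, $s>1$), and the second is bounded by $\norm{\psi}_{L^\iny(\Sigma_R)}\norm{\grad\varphi^R}_{L^\iny}$; for the gradient one differentiates once more, picking up $\norm{\grad\grad\varphi^R}_{L^\iny}\le CR^{-2\theta}$ against $\norm{\psi}_{L^\iny(\Sigma_R)}$ and $\norm{\grad\varphi^R}_{L^\iny}$ against $\norm{\grad\psi}_{L^\iny(\Sigma_R)} = \norm{u}_{L^\iny(\Sigma_R)}$ — all uniformly bounded in $R\ge 1$. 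Estimate (1) is similar but in $L^2$: the dangerous piece is $\grad(\psi\grad^\perp\varphi^R)$ integrated over $\Sigma_R$; the term $\psi\,\grad\grad\varphi^R$ contributes $R^{-2\theta}\norm{\psi}_{L^2(\Sigma_R)}$ and the term $(\grad\psi)(\grad^\perp\varphi^R)$ contributes $R^{-\theta}\norm{u}_{L^2(\Sigma_R)}$, and in each case one checks, using the pointwise decay of $\psi$ and $u$ against the area $|\Sigma_R|\sim R^{1+\theta}$, that the result is $O(1)$ (or better). Estimate (5), needed only when $s\ge 2$, follows the same pattern: $\Delta u^R = \grad^\perp\Delta(\varphi^R\psi) = \grad^\perp(\varphi^R\omega + 2\grad\varphi^R\cdot\grad\psi + \psi\Delta\varphi^R)$; since $\omega$ is compactly supported and smooth and $\varphi^R\equiv 1$ there, the troublesome contributions are again supported in $\Sigma_R$ and are handled by pairing derivative-of-$\varphi^R$ bounds with decay of $\psi$, $u$, $\grad u$ and the area of $\Sigma_R$.

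For the 3D case I would run the same argument with $\grad\times(\varphi^R\Psi)$ in place of $\grad^\perp(\varphi^R\psi)$, using the decay estimates on $\Psi$, $\grad\Psi$, $u$, $\grad u$ and $\prt_t\Psi$ supplied by \refT{EomegaBounds} (referenced in the excerpt), noting that here $\theta = 1$ is forced by the simpler definition of $\varphi^R$, the boundary layer $\Sigma_R$ has volume $\sim R^3$, and the kernel $|x|^{-1}$ gives $\Psi = O(|x|^{-1})$, $\grad\Psi = O(|x|^{-2})$, which plugged into the area/decay balance yields $\al = 1/2$, $\beta = 3/2$ directly with no case distinction (there is no analogue of $E_m$, since $V_C(\R^3)$ imposes no vanishing-integral obstruction). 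Finally, the assertion about which constants depend on what is read off the proof: in Case I the only inputs are $\norm{u}_{L^\iny}$, $\norm{\grad u}_{L^\iny}$ and the decay rates, all controlled purely in terms of $\Omega$ and the fixed initial data via the time-independent vorticity transport bound, whereas in Cases II and III one invokes the finite-time Euler estimates (Yudovich in 2D, \refT{EomegaBounds} in 3D), which introduce $T$-dependence through $R(T)$ and through $\norm{u(t)}_{C^s}$. The overall structure is thus: (i) algebraic decomposition of $u^R$; (ii) pointwise decay estimates for $\psi,\prt_t\psi,p$ (2D) and $\Psi,\grad\Psi,\prt_t\Psi,p$ (3D) on $\Sigma_R$; (iii) area-times-decay bookkeeping to read off each exponent — with step (ii), specifically the $m\ne 0$ logarithmic case, being the crux.
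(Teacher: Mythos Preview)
Your overall architecture---the decomposition $u^R = \varphi^R u + \psi\,\grad^\perp\varphi^R$, product-rule expansion, and area-times-decay bookkeeping on $\Sigma_R$---is exactly what the paper does for (1)--(3), (5)--(7), and your 3D sketch is on target. Two points, however, are not yet nailed down and the paper handles them differently from what you suggest.

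\textbf{Inequality (4a/4b).} Your claim that $-\Delta p = \sum_{i,j}\prt_i u^j\prt_j u^i$ has ``compactly supported right-hand side'' is false: only $\omega$ is compactly supported, not $\grad u$. Trying to extract pointwise decay of $p$ in 2D is awkward because the fundamental solution is logarithmic. The paper instead writes $-\Delta p = \dv\dv(u\otimes u)$ and $\Delta\psi_t = -\omega(\dv(u\otimes u))$, observes that $u\in L^4(\R^2)$ uniformly in time (split $u=v+\sigma$), and concludes via Calder\'on--Zygmund that $p,\psi_t\in L^2(\R^2)$ globally, uniformly in $t$. Then $\smallnorm{p\,\grad\varphi^R}_{L^2}\le \smallnorm{p}_{L^2(\R^2)}\smallnorm{\grad\varphi^R}_{L^\iny}\le CR^{-\theta}$ with no decay-on-$\Sigma_R$ analysis needed. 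The same trick works in 3D with $R^{-1}$.

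\textbf{The $m\ne 0$ case.} You correctly flag the logarithmic growth of $\psi_\sigma$ as the crux but do not say how to beat it. The paper's device is this: since in Case~II the domain is a \emph{disk}, $\psi_\sigma$ is constant on $\Gamma_R$, so one may subtract that constant and assume $\psi_\sigma|_{\Gamma_R}=0$; Poincar\'e on the strip $\Sigma_R$ of width $\delta_1 R^\theta$ then gives $\smallnorm{\psi_\sigma}_{L^2(\Sigma_R)}\le CR^\theta\smallnorm{\sigma}_{L^2(\Sigma_R)}\le C|m|R^{3\theta/2-1/2}$. This is precisely what produces the exponents in \refE{alDef} and \refE{betaDef}, and it is where the disk hypothesis is genuinely used---without it the best normalization loses a full factor of $R$ and the argument fails (the paper remarks on this explicitly). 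Your proposal does not yet contain this step.
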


%
%
\section{Energy argument}\label{S:uProofOfMainResult}

\noindent \begin{proof}[\textbf{Proof of \refT{MainResult}}] The proof proceeds much as in Section 2 of \cite{FLI2007}: Using our approximate solution 
$u^R$ to ($E$) we make an energy argument to bound the difference
\begin{align*}
	W = u^{\nu, R} - u^R
\end{align*}
in the $L^2$ norm. Then using inequality (6) of \refP{FLILikeBasicEstimates} we apply the triangle inequality to complete the proof. 
We give the argument in $2$ dimensions only; it is valid with minor adaptations in $3$ dimensions. The only delicate point in adapting 
to $3$ dimensions is that we deal with weak Leray solutions for which we cannot perform energy estimates. However the energy inequality is equivalent to the necessary estimates; see \cite{FLI2007} for a more detailed discussion of this issue.

Subtracting the identity in \refE{uRIdentity2D} from ($NS$) we obtain
\begin{align*}
    \prt_t W - \nu \Delta W
        = - u^{\nu, R} \cdot &\grad u^{\nu, R}
          - \grad p^{\nu, R}
          + \nu \Delta u^R
          + \varphi^R u \cdot \grad u \\
          &+ \varphi^R \grad p
          - \prt_t \psi \grad^\perp \varphi^R.
\end{align*}
Multiplying both sides by $W$ and integrating over $\Omega_R$ gives
\begin{align*}
    \frac{1}{2} &\diff{}{t} \smallnorm{W}_{L^2}^2 + \nu \norm{\grad W}_{L^2}^2
        = I_1 + I_2 + I_3 + I_4 + I_5,
\end{align*}
where when $s > 1$,
\begin{align*}
    \begin{array}{ll}
        I_1 = - \nu \int_{\Omega_R} \grad W \cdot \grad u^R,
       &I_2 = - \int_{\Omega_R} (u^{\nu, R} \cdot \grad u^{\nu, R}) \cdot W, \\
        I_3 = \int_{\Omega_R} (\varphi^R u \cdot \grad u) \cdot W,
       &I_4 = \int_{\Omega_R} \varphi^R \grad p \cdot W, \\
        I_5 = - \int_{\Omega_R} \prt_t \psi \grad^\perp \varphi^R \cdot W.
    \end{array}
\end{align*}
In $I_1$ we integrated by parts to remove $\Delta u^R$, but when $s > 2$ it is more
advantageous to retain it, using
\begin{align*}
    I_1 = \nu \int_{\Omega_R} W \cdot \Delta u^R.
\end{align*}

When $s > 1$ we apply the Cauchy-Schwarz and Young's inequalities to the first form of $I_1$ to get
\begin{align*}
    \abs{I_1}
       &\le \frac{\nu}{2} \pr{\norm{\grad W}_{L^2}^2 + \smallnorm{\grad u^R}_{L^2}^2}
        \le \frac{\nu}{2} \norm{\grad W}_{L^2}^2 + C \nu,
\end{align*}
and when $s > 2$ we apply the Cauchy-Schwarz inequality to the second form of $I_1$ to get
\begin{align*}
    \abs{I_1}
       &\le C \nu \smallnorm{\Delta u^R}_{L^2(\Omega_R)} \norm{W}_{L^2(\Omega_R)}
        \le C \nu \norm{W}_{L^2(\Omega_R)}.
\end{align*}

Summing $I_2$ and $I_3$ and using
\begin{align*}
    \int_{\Omega_R} (u^{\nu, R} \cdot \grad W) \cdot W = 0
\end{align*}
we have
\begin{align*}
    &\abs{I_2 + I_3} \\
       &\quad= \abs{\int_{\Omega_R} (u^R \cdot \grad (u - u^R)) \cdot W
                - (W \cdot \grad u^R) \cdot W
                + [(\varphi^R u - u^R) \cdot \grad u] \cdot W} \\
       &\quad\le \smallnorm{u^R}_{L_\iny} \norm{\grad (u - u^R)}_{L^2}
                    \norm{W}_{L^2}
                + \smallnorm{\grad u^R}_{L^\iny} \norm{W}_{L^2}^2 \\
       &\qquad\qquad\qquad
                + \smallnorm{\varphi^R u - u^R}_{L^2}
                  \smallnorm{\grad u}_{L^\iny}
                  \norm{W}_{L^2} \\
       &\quad\le C R^{-\beta} \norm{W}_{L^2} + C \norm{W}_{L^2}^2 + C R^{-\al}
                  \norm{W}_{L^2} \\
       &\qquad\le C (R^{-\al} + \norm{W}_{L^2}) \norm{W}_{L^2},
\end{align*}
where we used inequalities (6) and (7) from \refP{FLILikeBasicEstimates} and also that $\al \le \beta$.

Summing $I_4$ and $I_5$ and integrating the first term by parts gives,
\begin{align*}
	\abs{I_4 + I_5}
        	&= \abs{\int_{\Omega_R} p \grad \varphi^R \cdot W +
               \prt_t \psi \grad^\perp \varphi^R \cdot W} \\
       &\le \pr{\smallnorm{p \grad \varphi^R}_{L^2}
                + \smallnorm{\prt_t \psi \grad^\perp \varphi^R}_{L^2}}
                  \norm{W}_{L^2}
        \le C R^{-\theta} \norm{W}_{L^2} \\
       &\le C R^{-2 \theta} + C \norm{W}_{L^2}^2.
\end{align*}

When $s > 1$, we conclude that
\begin{align*}
    \frac{1}{2} &\diff{}{t} \smallnorm{W}_{L^2}^2 + \nu \norm{\grad W}_{L^2}^2 \\
       &\qquad
       	\le \frac{\nu}{2} \norm{\grad W}_{L^2}^2
            + C \nu + C R^{-\al} \norm{W}_{L^2} + C R^{-2 \theta} + C \norm{W}_{L^2}^2 \\
       &\qquad
       	\le \frac{\nu}{2} \norm{\grad W}_{L^2}^2
            + C \nu + C (R^{-2 \al} + R^{-2 \theta}) + C \norm{W}_{L^2}^2.
\end{align*}
Integrating in time gives
\begin{align*}
    &\smallnorm{W(t)}_{L^2}^2 + \nu \int_0^t \norm{\grad W}_{L^2}^2 \\
       &\qquad
       	\le \smallnorm{W(0)}_{L^2}^2
            + C T \nu + C T (R^{-2 \al} + R^{-2 \theta}) + C \int_0^t \norm{W}_{L^2}^2.
\end{align*}

It follows from Gronwall's lemma that
\begin{align*}
    &\norm{W}_{L^\iny([0, T]; L^2(\Omega_R))} 
    	\le \pr{F(R)^2 + C (\nu + R^{-2 \al} + R^{-2 \theta})}^{1/2} e^{CT} \\
		&\qquad
			\le \pr{F(R) + C (\nu^{1/2} + R^{-\al} + R^{-\theta})} e^{CT}.
\end{align*}

Then from the triangle inequality and inequality (6) of \refP{FLILikeBasicEstimates},
\begin{align*}
   &\smallnorm{u^{\nu, R} - u}_{L^\iny([0, T]; L^2(\Omega_R))} \\
        &\qquad
            \le \smallnorm{u^R - u}_{L^\iny([0, T]; L^2(\Omega_R))}
            + \norm{W}_{L^\iny([0, T]; L^2(\Omega_R))} \\
        &\qquad
            \le C R^{-\al} + \pr{F(R) + C (\nu^{1/2} + R^{-\al} + R^{-\theta})} e^{CT} \\
        &\qquad
            \le \pr{F(R) + C (\nu^{1/2} + R^{-\al} + R^{-\theta})} e^{CT}.
\end{align*}

When $s > 2$, we have instead that
\begin{align*}
    \smallnorm{W}_{L^2} &\diff{}{t} \smallnorm{W}_{L^2}
        = \frac{1}{2} \diff{}{t} \smallnorm{W}_{L^2}^2
        \le \frac{1}{2} \diff{}{t} \smallnorm{W}_{L^2}^2
                    + \nu \norm{\grad W}_{L^2}^2 \\
       &\le C \nu \norm{W}_{L^2}
            + C (R^{-\al} + R^{-\theta}) \norm{W}_{L^2} + C \norm{W}_{L^2}^2.
\end{align*}
Dividing both sides by $\smallnorm{W}_{L^2}$ (it is easy to see that division by zero will not invalidate the following inequality after integrating in time) gives
\begin{align*}
    \diff{}{t} \smallnorm{W}_{L^2}
        \le C \nu
            + C R^{-\al} + C R^{-\theta} + C \norm{W}_{L^2}.
\end{align*}
Integrating in time and applying Gronwall's lemma, we have
\begin{align*}
    \smallnorm{W}_{L^2}
        \le \pr{F(R) + C (\nu + C R^{-\al} + R^{-\theta})} e^{CT}.
\end{align*}
The bound on $\smallnorm{u^{\nu, R} - u}_{L^\iny([0, T]; L^2(\Omega_R))}$
follows from the triangle inequality as for $s > 1$.

The value of $\al$ in the statement of \refT{MainResult} is chosen so that $\alpha=\alpha(\theta)$ gives the optimal rate of convergence in each case;  this corresponds to $\theta = 1$ for Case I; $\theta = 1/3$ for Case II so that $\theta = \al$; and $\theta = 1$ was fixed for Case III. 
\end{proof}

%
%
\section{Truncation operator in 2D}\label{S:Truncation2D}

\noindent Let $u$ be in $E_m \cap C^1$ for some $m$ in $\R$ with vorticity $\omega(u)$ having compact support in a ball of radius 
$R_0$.  Let $\psi$ be the stream function, as defined by the expression in \eqref{2Dstream}.

Let $\varphi^R$ and $\Sigma_R$ be defined as in \refS{uR} and  recall the definition of $u^R$ and $\T_R$ given in \eqref{e:TRu}, 
$u^R=\T_R u = \nabla^{\perp}(\varphi^R\psi)$. 
To explore the properties of $\T_R$ we must first establish some bounds on the $L^2$ norms of $u$, $\grad u$, and $\psi$ in $\Sigma_R$. To this end we write 
\[u = v + \sigma,\]
where $v$ is in $V_C(\real^2)$ and $\sigma$ is a stationary solution with radially symmetric, smooth, compactly supported vorticity of integral $m$; we assume that $\omega(\sigma)$ is of distinguished sign. We can assume, without loss of generality, that the support of $\omega(\sigma)$ is also contained in the ball of radius $R_0$, from which it follows that the support of $\omega(v)$ is contained in this same ball. Now, $v$ and $\sigma$ are also $C^1$ divergence-free vector fields and hence we can define their associated stream functions $\psi_v$ and $\psi_{\sigma}$ using the expression in \eqref{2Dstream}. But then $v = \nabla^{\perp}\psi_v$ and $\sigma = \nabla^{\perp} \psi_{\sigma}$. It follows in particular that $v$ can be written in terms of $\omega(v)$ through the Biot-Savart law 
$v = K \ast \omega(v)$, an integral operator with kernel
\begin{equation} \label{BSkernel2D}
K=K(z) = \frac{1}{2\pi}\frac{z^{\perp}}{|z|^2}.
\end{equation}
From this explicit expression and using the fact that the integral of $\omega(v)$ vanishes, together with the easily obtained estimate
\[ \smallnorm{\omega(v)}_{L^1(\R^2)}\leq 2\smallnorm{\omega(u)}_{L^1(\R^2)},\] 
it follows that there exists $C = C(R_0)>0$ such that  
\begin{align}\label{e:vGrowthBound}
	\begin{split}
	    \abs{v(x)} 
	    	&\le C C_0/\abs{x}^2, \;
	    \abs{\grad v(x)} 
	    	 \le C C_0/\abs{x}^3
	\end{split}
\end{align}
for all $\abs{x} \ge 2R_0$, with $C_0 =  2\smallnorm{\omega(u)}_{L^1(\R^2)}$. 
Similarly, it follows from the explicit expression for $\psi_v$, \eqref{2Dstream}, that 
 \begin{align}\label{e:psivBound}
    \abs{\psi_v(x)} \le C C_0/\abs{x}
        \text{ for all } \abs{x} \ge 2R_0.
\end{align}

Put coordinates on $\Sigma_R$ as in the definition of $\varphi_R$ in \refS{uR}. Letting $a$ be the length of $\Gamma_1$ it follows that the length of 
$\Gamma_R$ is $aR$. Then
\begin{align*}
	&\smallnorm{\psi_v}_{L^2(\Sigma_R)}^2
		= \int_0^{aR} \int_0^{\delta_1 R^\theta}
			\abs{J(s, r)} \abs{\psi_v(s, r)}^2 \, dr \, ds,
\end{align*}
where $J$ is the Jacobian of the transformation from rectangular coordinates to $(s, r)$-coordinates. Because of the way we constructed $\Sigma_R$ and because $\theta \le 1$, $\abs{J} \le C$ and $\Sigma_R$ lies outside a ball of radius $C(\Omega) R$.
Thus by \refE{psivBound}, $\abs{\psi_v(s, r)} \le C C_0/R$ in the integral above as long as $C(\Omega) R \ge 2 R_0$; that is, as long as
\begin{align}\label{e:MuCondition}
	R \ge \mu(\Omega) R_0,
\end{align}
where $\mu(\Omega) = 2/C(\Omega)$ depends only on the geometry of $\Omega$. Then
\begin{align*}
	&\smallnorm{\psi_v}_{L^2(\Sigma_R)}
		\le \pr{C C_0^2 R^{-2} aR \delta_1 R^\theta}^{1/2}
		\le C C_0 R^{\theta/2 - 1/2}.
\end{align*}
Since each of $\grad^\perp$ and $\grad$ introduces an extra factor of $1/\abs{x}$, it follows that
\begin{align*}
	\smallnorm{v}_{L^2(\Sigma_R)}
		\le C C_0 R^{\theta/2 - 3/2}, \;
	\smallnorm{\grad v}_{L^2(\Sigma_R)} 
		\le C C_0 R^{\theta/2 - 5/2}.
\end{align*}

As previously pointed out, see \eqref{sigma},  $\sigma$ is given by 
\begin{align*}
	\sigma(x_1, x_2) 
		&= \pr{-\frac{x_2}{\abs{x}^2} \int_0^{\abs{x}} r \omega(\sigma)(r) \, dr, \,
		       \frac{x_1}{\abs{x}^2} \int_0^{\abs{x}} r \omega(\sigma)(r) \, dr}
\end{align*}
so that $\abs{\sigma(x)} = \abs{m}(2 \pi)^{-1}/\abs{x}$ for $\abs{x} \ge R_0$.   Thus, $\sigma$ decays like $\psi_v$ so we can see that
\begin{align*}
	&\smallnorm{\sigma}_{L^2(\Sigma_R)}
		\le C \abs{m} R^{\theta/2 - 1/2}, \;
	\smallnorm{\grad \sigma}_{L^2(\Sigma_R)}
		\le C \abs{m} R^{\theta/2 - 3/2}.
\end{align*}

The expression for $\psi_{\sigma}$ can be calculated directly using \eqref{2Dstream} together with the radial symmetry of $\omega(\sigma)$. Of course, we can add an arbitrary constant to $\psi_{\sigma}$ and still satisfy the equations $\sigma = \nabla^{\perp}\psi_{\sigma}$ and $\Delta \psi_{\sigma} = \omega(\sigma)$. For $|x| \geq R_0$ we obtain:
\[\psi_{\sigma}(x)= \frac{m}{2\pi} \log |x| + C.\]

Since when $m \ne 0$ we assume that $\Omega$ is a disk centered at the origin, we can choose the constant $C_R$ so that $\psi_\sigma = 0$ on 
$\Gamma_R$. The  value of $\prt_t \psi$ is unaffected by the choice of $C_R$, however, and $\prt_t \psi$ in inequality (4a) of \refP{FLILikeBasicEstimates} is the only direct use of $\psi$ that we make, so the choice of $C_R$, though it depends on $R$, will not affect any of our estimates. 

Applying Poincare's inequality (or integrating $\psi_\sigma$ directly) gives
\begin{align*}
	\smallnorm{\psi_\sigma}_{L^2(\Sigma_R)}
	   &\le C R^\theta \smallnorm{\sigma}_{L^2(\Sigma_R)}
	    \le C \abs{m} R^{3 \theta/2 - 1/2}.
\end{align*}
The factor of $R^\theta$ here comes from the thickness of $\Sigma_R$.

Adding the corresponding bounds for $u = v + \sigma$ and $\psi = \psi_v + \psi_\sigma$, 
\begin{align}\label{e:psiuL2SigmaRBound}
	&\smallnorm{\psi}_{L^2(\Sigma_R)}
		\le C C_0 R^{\theta/2 - 1/2} + C \abs{m} R^{3 \theta/2 - 1/2},
\end{align}
and for the velocity
\begin{align}\label{e:uL2SigmaRBound}
	\smallnorm{u}_{L^2(\Sigma_R)}
		\le C C_0 R^{\theta/2 - 3/2} + C \abs{m} R^{\theta/2 - 1/2}
\end{align}
and
\begin{align}\label{e:graduL2SigmaRBound}
	\smallnorm{\grad u}_{L^2(\Sigma_R)} 
		\le C C_0 R^{\theta/2 - 5/2} + C \abs{m} R^{\theta/2 - 3/2}.
\end{align}
These inequalities each hold as long as \refE{MuCondition} holds.

Let $X$ be the subspace of all vector fields in $E_m \cap \dot{H}_1(\R^2)$ whose vorticity has compact support.
We can now describe the relevant properties of the two dimensional truncation operator, adapting Lemma 4.2 of \cite{K2005UBD}.
\begin{prop}\label{P:Truncate2D}
    Let $\Omega$ be a disk centered at the origin and let the truncation operator
    $\T_R$ be defined as in \refE{TRu}.
    Then $\T_R \colon X \to V(\Omega_R)$ with the
    property that for all $u$ in $X$
    \begin{align}\label{e:uTruL2Bound}
        \smallnorm{u - \T_R u}_{L^2(\Omega_R)} 
        	\le C\smallnorm{\omega(u)}_{L^1(\R^2)} R^{-\al}
	\end{align}
    and
    \begin{align}\label{e:ugradTruL2Bound}
        \smallnorm{\grad(u - \T_R u)}_{L^2(\Omega_R)} 
        	\le C\smallnorm{\omega(u)}_{L^1(\R^2)} R^{-\beta}
	\end{align}
    for all $R$ satisfying \refE{MuCondition}, where $C = C(\Omega)$,
    $\al$ is defined in \refE{alDef}, and $\beta$ is defined in \refE{betaDef}.
    Also,
    \begin{align}\label{e:TRuL2Bound}
		\smallnorm{\grad \T_R u}_{L^2(\Omega_R)}
			\le C \smallnorm{\omega(u)}_{L^2(\R^2)},
	\end{align}
	where $C = C(R_0)$.
\end{prop}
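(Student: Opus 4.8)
The plan rests on one structural observation: since $\varphi^R \equiv 1$ on $\Omega_R \setminus \Sigma_R$, the truncated field $\T_R u = \grad^\perp(\varphi^R \psi)$ coincides with $u = \grad^\perp \psi$ there, so the error $u - \T_R u$ is supported in the collar $\Sigma_R$, where the product rule gives
\[
    u - \T_R u = \grad^\perp\bigl((1 - \varphi^R)\psi\bigr) = (1 - \varphi^R)\, u - \psi\, \grad^\perp \varphi^R .
\]
Before estimating, I would dispatch the membership claim $\T_R u \in V(\Omega_R)$: the field is divergence-free automatically (a perpendicular gradient); it vanishes on $\Gamma_R$ because $\grad^\perp(\varphi^R \psi) = \varphi^R \grad^\perp \psi + \psi\, \grad^\perp \varphi^R$ and both $\varphi^R$ and $\grad \varphi^R$ vanish on $\Gamma_R$ by construction; and it lies in $H^1(\Omega_R)$ because $\varphi^R \in C^\iny$, $\grad \grad \psi \in L^2(\R^2)$ with $\smallnorm{\grad \grad \psi}_{L^2(\R^2)} = \smallnorm{\grad u}_{L^2(\R^2)} = \smallnorm{\omega(u)}_{L^2(\R^2)}$, and the lower-order parts of $\psi$ lie in $L^2(\Omega_R)$ since $\Omega_R$ is bounded.

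For \eqref{e:uTruL2Bound} I would take $L^2(\Sigma_R)$ norms in the displayed identity, using $\smallnorm{(1 - \varphi^R) u}_{L^2(\Sigma_R)} \le \smallnorm{u}_{L^2(\Sigma_R)}$ and $\smallnorm{\psi\, \grad^\perp \varphi^R}_{L^2(\Sigma_R)} \le \norm{\grad \varphi^R}_{L^\iny(\Omega_R)} \smallnorm{\psi}_{L^2(\Sigma_R)}$, and then substituting the collar estimates \eqref{e:uL2SigmaRBound} and \eqref{e:psiuL2SigmaRBound} together with $\norm{\grad \varphi^R}_{L^\iny} \le C R^{-\theta}$ from \eqref{e:varphiBounds}. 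This produces a sum of terms $C_0 R^{\theta/2 - 3/2}$, $C_0 R^{-\theta/2 - 1/2}$ and $\abs{m}\, R^{\theta/2 - 1/2}$, where $C_0 = 2\smallnorm{\omega(u)}_{L^1(\R^2)}$. Since $\theta \in [0, 1]$, the slowest-decaying contribution is $R^{-\theta/2 - 1/2} = R^{-\al}$ when $m = 0$ and $R^{\theta/2 - 1/2} = R^{-\al}$ when $m \ne 0$, precisely matching the definition \eqref{e:alDef}; using $\abs{m} \le \smallnorm{\omega(u)}_{L^1}$ then gives the stated bound.

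For \eqref{e:ugradTruL2Bound} I would differentiate the same identity once more. Distributing $\grad$ over $(1 - \varphi^R) u - \psi\, \grad^\perp \varphi^R$ produces terms with zero, one, or two derivatives on $\varphi^R$; bounding the factors by \eqref{e:varphiBounds} (so $\norm{\grad \varphi^R}_{L^\iny} \le C R^{-\theta}$ and $\norm{\grad \grad \varphi^R}_{L^\iny} \le C R^{-2\theta}$) and using the pointwise identity $\abs{\grad \psi} = \abs{u}$ reduces the $L^2(\Sigma_R)$ norm to $\smallnorm{\grad u}_{L^2(\Sigma_R)} + C R^{-\theta} \smallnorm{u}_{L^2(\Sigma_R)} + C R^{-2\theta} \smallnorm{\psi}_{L^2(\Sigma_R)}$. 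Substituting \eqref{e:graduL2SigmaRBound}, \eqref{e:uL2SigmaRBound}, \eqref{e:psiuL2SigmaRBound}, one gets a sum of powers of $R$ whose slowest-decaying member is $R^{-3\theta/2 - 1/2} = R^{-\beta}$ when $m = 0$ and $R^{-\theta/2 - 1/2} = R^{-\beta}$ when $m \ne 0$, matching \eqref{e:betaDef}.

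Finally, for \eqref{e:TRuL2Bound} I would split $\Omega_R = (\Omega_R \setminus \Sigma_R) \cup \Sigma_R$. On $\Omega_R \setminus \Sigma_R$ we have $\T_R u = u$, contributing $\smallnorm{\grad u}_{L^2(\R^2)} = \smallnorm{\omega(u)}_{L^2(\R^2)}$. On $\Sigma_R$, expanding $\grad \T_R u = \grad(\varphi^R u + \psi\, \grad^\perp \varphi^R)$ by the product rule and again using $\abs{\grad \psi} = \abs{u}$ bounds it by $\smallnorm{\grad u}_{L^2(\Sigma_R)} + C R^{-\theta} \smallnorm{u}_{L^2(\Sigma_R)} + C R^{-2\theta} \smallnorm{\psi}_{L^2(\Sigma_R)}$; here $\smallnorm{\grad u}_{L^2(\Sigma_R)} \le \smallnorm{\omega(u)}_{L^2(\R^2)}$ directly, while the collar estimates bound the last two terms by $C \smallnorm{\omega(u)}_{L^1(\R^2)}$ times powers of $R$ that do not exceed $1$ for $R \ge 1$. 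Since $\supp \omega(u) \subseteq B_{R_0}$, Cauchy--Schwarz gives $\smallnorm{\omega(u)}_{L^1(\R^2)} \le C(R_0) \smallnorm{\omega(u)}_{L^2(\R^2)}$, yielding the claimed bound with $C = C(R_0)$. The argument is largely bookkeeping; the one genuine subtlety is the power-counting case split $m = 0$ versus $m \ne 0$, and, relatedly, that for $m \ne 0$ neither $u$ nor $\psi$ is globally square-integrable on $\R^2$, so one must work with the collar decay estimates rather than restricting a global $L^2$ norm.
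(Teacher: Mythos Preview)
Your proposal is correct and follows essentially the same approach as the paper: expand $u - \T_R u = (1-\varphi^R)u - \psi\,\grad^\perp\varphi^R$ (and its gradient) on the collar $\Sigma_R$, bound the cutoff derivatives by \eqref{e:varphiBounds}, substitute the collar estimates \eqref{e:psiuL2SigmaRBound}--\eqref{e:graduL2SigmaRBound}, and then identify the dominant power of $R$ in the $m=0$ and $m\ne 0$ cases, invoking $\abs{m}\le\smallnorm{\omega(u)}_{L^1}$ and, for \eqref{e:TRuL2Bound}, $\smallnorm{\omega(u)}_{L^1}\le C(R_0)\smallnorm{\omega(u)}_{L^2}$. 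Your explicit verification that $\T_R u\in V(\Omega_R)$ and your splitting $\Omega_R=(\Omega_R\setminus\Sigma_R)\cup\Sigma_R$ for \eqref{e:TRuL2Bound} are slightly more detailed than the paper's terse treatment, but the substance is identical.
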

\begin{proof}
	Using the inequalities in 
    \refE{varphiBounds}, \refE{psiuL2SigmaRBound}, \refE{uL2SigmaRBound}, and 
    \refE{graduL2SigmaRBound}, we have,
    \begin{align*}
        &\smallnorm{u - \T_R u}_{L^2(\Omega_R)}
                = \smallnorm{\grad^\perp \psi - \varphi^R \grad^\perp \psi
                            - \psi \grad^\perp \varphi^R}_{L^2(\Omega_R)} \\
        &\qquad\le \smallnorm{(1 - \varphi^R) \grad^\perp \psi}_{L^2(\Omega_R)}
                + \smallnorm{\psi \grad^\perp \varphi^R}_{L^2(\Omega_R)} \\
        &\qquad\le \smallnorm{u}_{L^2(\Sigma_R)}
                + \smallnorm{\grad\varphi^R}_{L^\iny(\Sigma_R)}
                \smallnorm{\psi}_{L^2(\Sigma_R)} \\
        &\qquad\le C C_0 R^{\theta/2 - 3/2} + C \abs{m} R^{\theta/2 - 1/2}
       			+ C R^{-\theta} C_0 R^{\theta/2 - 1/2} \\
        	&\qquad\qquad\qquad\qquad
				+ C R^{-\theta} \abs{m} R^{3\theta/2 - 1/2} \\
        &\qquad\le C C_0 R^{\theta/2 - 3/2} + C \abs{m} R^{\theta/2 - 1/2}
       			+ C C_0 R^{-\theta/2 - 1/2} \\
        	&\qquad\qquad\qquad\qquad
				+ C \abs{m} R^{\theta/2 - 1/2} \\
		&\qquad\le C C_0 R^{-\theta/2 - 1/2} + C \abs{m} R^{\theta/2 - 1/2}.
    \end{align*}
    In the final inequality we needed to only include the two terms that
    dominate (depending on whether 
    $m = 0$) for all $\theta$ in $[0, 1]$. Clearly, $|m| \leq \smallnorm{\omega(u)}_{L^1(\R^2)}$, so this gives 
    \refE{uTruL2Bound}.

    Similarly,
    \begin{align*}
        &\smallnorm{\grad u - \grad \T_R u}_{L^2(\Omega_R)}
            = \smallnorm{\grad u - \grad \grad^\perp (\varphi^R \psi)}_{L^2(\Omega_R)} \\
            &\qquad= \smallnorm{\grad u - \grad(\varphi^R \grad^\perp \psi)
                    - \grad(\psi \grad^\perp \varphi^R)}_{L^2(\Omega_R)} \\
            &\qquad = \smallnorm{\grad u - \varphi^R \grad \grad^\perp \psi
                        - \grad \varphi^R \otimes \grad^\perp \psi
                    - \grad \psi \otimes \grad^\perp \varphi^R
                    - \psi \grad \grad^\perp \varphi^R}_{L^2(\Omega_R)} \\
            &\qquad = \smallnorm{(1 - \varphi^R) \grad u
                        - \grad \varphi^R \otimes \grad^\perp \psi
                    - \grad \psi \otimes \grad^\perp \varphi^R
                    - \psi \grad \grad^\perp \varphi^R}_{L^2(\Omega_R)} \\
            &\qquad \le \smallnorm{\grad u}_{L^2(\Sigma_R)}
                + 2 \norm{\grad \varphi^R}_{L^\iny(\Sigma_R)}
                    \smallnorm{u}_{L^2(\Sigma_R)}
                + \smallnorm{\grad \grad \varphi^R}_{L^\iny(\Sigma_R)}
                    \norm{\psi}_{L^2(\Sigma_R)} \\
            &\qquad \le C C_0 R^{\theta/2 - 5/2} + C \abs{m} R^{\theta/2 - 3/2} 
                    + C R^{-\theta} \pr{C_0 R^{\theta/2 - 3/2} 
                    + \abs{m} R^{\theta/2 - 1/2}} \\
            &\qquad\qquad
                    + C R^{-2 \theta} \pr{ C_0 R^{\theta/2 - 1/2} 
                    	+ \abs{m} R^{3 \theta/2 - 1/2}} \\
            &\qquad
            	= C C_0 R^{\theta/2 - 5/2} + C \abs{m} R^{\theta/2 - 3/2} 
                    + C C_0 R^{-\theta/2 - 3/2} + C \abs{m} R^{-\theta/2 - 1/2} \\
            &\qquad\qquad
                    + C C_0 R^{-3 \theta/2 - 1/2} + C \abs{m} R^{-\theta/2 - 1/2} \\
            &\qquad
            	\le C C_0 R^{-3 \theta/2 - 1/2} + C \abs{m} R^{- \theta/2 - 1/2}.
    \end{align*}

	Since, as we noted, $|m| \leq \smallnorm{\omega(u)}_{L^1(\R^2)}$, the estimate above gives \refE{ugradTruL2Bound}.
	The bound on $\smallnorm{\grad \T_R u}_{L^2(\Omega_R)}$ is obtained in the same way except that 
	$\smallnorm{\grad u}_{L^2(\Sigma_R)}$ is replaced by 
	$\smallnorm{\grad u}_{L^2(\Omega_R \setminus \Sigma_R)} 
	\le \smallnorm{\grad u}_{L^2(\R^2)} = \smallnorm{\omega(u)}_{L^2(\R^2)}$. 
	But $\smallnorm{\omega}_{L^1(\R^2)} \le C \smallnorm{\omega}_{L^2(\R^2)}$ 
	because the support of $\omega$ is contained in $B_{R_0}$, 
	giving \refE{TRuL2Bound}.
\end{proof}

\begin{remark}

Had we not added the restriction that $\Omega$ is a disk, then since $\psi_\sigma$ is defined only up to an additive constant, we could choose this constant so that $\int_{\Sigma_R} \psi_\sigma = 0$. By normalizing $\psi_\sigma$ this way, though, the factor of $R^\theta$ in Poincare's inequality becomes the diameter $CR$ of $\Omega_R$ and we have 
\begin{align*}
	\smallnorm{\psi_\sigma}_{L^2(\Sigma_R)}
	   &\le C R \smallnorm{\sigma}_{L^2(\Sigma_R)}
	    \le C m R^{\theta/2 + 1/2}.
\end{align*}
This gives
\begin{align}\label{e:psiuL2SigmaRBoundNotADisk}
	&\smallnorm{\psi}_{L^2(\Sigma_R)}
		\le C C_0 R^{\theta/2 - 1/2} + C m R^{\theta/2 + 1/2}
\end{align}
and we would have had the following bound in the proof of \refP{Truncate2D}:
 	\begin{align*}
         &\smallnorm{u - \T_R u}_{L^2(\Omega_R)} \\
        	&\qquad
				\le C C_0 R^{\theta/2 - 3/2} + C m R^{\theta/2 - 1/2}
       				+ C C_0 R^{-\theta/2 - 1/2} + C m R^{-\theta/2 + 1/2} \\
			&\qquad\le 
				C C_0 R^{\theta/2 - 1/2} + C m R^{-\theta/2 + 1/2}.
    \end{align*}
    No value of $\theta$ in $[0, 1]$ will allow this to vanish as $R \to \iny$ when $m \ne 0$.
\end{remark}



%
%
\section{Estimates in 2D}\label{S:2DEstimates}

\noindent \begin{proof}[\textbf{Proof of \refP{FLILikeBasicEstimates} in 2D}]

\noindent Define $u^R$, $\varphi^R$, and $\Sigma_R$ as in \refS{uR}. Then
\begin{align}\label{e:uR2D}
	\begin{split}
		u^R 
		   &= \grad^\perp(\varphi^R \psi) 
		    = \varphi^R u + \psi \grad^\perp \varphi^R
	\end{split}
\end{align}
where $\psi = \psi_v + \psi_\sigma$ as in \refS{Truncation2D}.

The $L^1$ and $L^2$ norms (indeed, all $L^p$ norms) of the vorticity for solutions to ($E$) are conserved over time, while the bounds on the $L^2$ 
(for $m=0$) and $L^\iny$ norms of the velocity are bounded over any finite time interval (the former is conserved, and a bound on the latter is uniform). Thus, the estimates in \refP{Truncate2D} when applied to our solution $u$ to ($E$) are uniform in time. Hence, in the estimates that follow we will generally not explicitly refer to the bounds in time.

\skipline\noindent\textbf{Inequality 1}: follows directly from \refP{Truncate2D}.

\skipline\noindent\textbf{Inequality 2}: We have
\begin{align*}
    \smallnorm{u^R}_{L^\iny(\Omega_R)}
       &\le \smallnorm{\varphi^R u}_{L^\iny(\Omega_R)}
                + \smallnorm{\psi \grad^\perp \varphi^R}_{L^\iny(\Omega_R)} \\
       &\le \norm{u}_{L^\iny(\R^2)} + C R^{-\theta} \smallnorm{\psi}_{L^\iny(\Sigma_R)}.
\end{align*}
From \refE{psivBound}, $\norm{\psi_v}_{L^\iny(\Sigma_R)} \le C C_0/R$ and applying Poincare's inequality in the $L^\iny$ norm,
\begin{align*}
	\smallnorm{\psi_\sigma}_{L^\iny(\Sigma_R)}
		&\le C R^\theta \norm{\sigma}_{L^\iny(\Sigma_R)}
		\le C \abs{m} R^\theta R^{-1}
		\le C C_0 R^{\theta - 1}.
\end{align*}
We conclude that inequality (2) holds for sufficiently large $R$.

\skipline\noindent\textbf{Inequality 3}: We have,
\begin{align*}
    \smallnorm{\grad u^R}_{L^\iny}
       &\le \smallnorm{\grad u}_{L^\iny(\Omega_R)}
            + 2 \norm{\grad \varphi^R}_{L^\iny(\Sigma_R)}
                \smallnorm{u}_{L^\iny(\Sigma_R)} \\
       &\qquad\qquad
            + \smallnorm{\grad \grad^\perp \varphi^R}_{L^\iny(\Sigma_R)}
                \norm{\psi}_{L^\iny(\Sigma_R)} \\
       &\le C + C R^{-\theta} + C R^{-2 \theta} C_0 R^{\theta - 1}.
\end{align*}
We conclude that inequality (3) holds for sufficiently large $R$.

\skipline\noindent\textbf{Inequality 4a}: 
We begin with the observation that 
\begin{equation} \label{2Dparttpsi}
\Delta \psi_t = - \omega( \mbox{div } (u \otimes u) )
\end{equation}
and
\begin{equation} \label{2Dpressure}
-\Delta p = \mbox{ div } \mbox{div } (u\otimes u).
\end{equation}
Now, $u$ is bounded, uniformly over a finite time interval, in $L^4(\real^2)$. To see this write $u = v+\sigma$; clearly $\sigma \in L^4(\real^2)$ and 
$v \in L^{\infty} \cap L^2(\real^2)$. It follows that the right-hand side of both \eqref{2Dparttpsi} and \eqref{2Dpressure} are  second derivatives of an $L^2$ vector field. We can express $\psi_t$ and $p$ as  linear combinations of Riesz transforms of terms which are uniformly bounded in 
$L^2(\real^2)$ and hence, by the Calderon-Zygmund inequality, it follows that $\psi_t$ and $p$ are both  bounded, uniformly over a finite time interval, in 
$L^2(\real^2)$. 
 Thus,
\begin{align*}
   &\smallnorm{p \grad \varphi^R}_{L^2(\Omega_R)}
            + \smallnorm{\prt_t \psi \grad \varphi^R}_{L^2(\Omega_R)} \\
       &\qquad\le
              \smallnorm{p}_{L^2(\Sigma_R)}
              \smallnorm{\grad \varphi^R}_{L^\iny(\Sigma_R)}
            + \smallnorm{\prt_t \psi}_{L^2(\Sigma_R)}
              \smallnorm{ \grad \varphi^R}_{L^\iny(\Sigma_R)} \\
       &\qquad\le
            C R^{-\theta} + C R^{-\theta}
            = C R^{-\theta}.
\end{align*}

\skipline\noindent\textbf{Inequality 5}: For $f$ a scalar and $v$ a vector field we have that  
\[\Delta (f v) = \Delta f v + 2 \grad f \cdot (\grad v)^T + f \Delta v,\] so
\begin{align*}
	\Delta u^R 
	   &= \Delta(\varphi^R u) + \Delta(\psi \grad^\perp \varphi^R) \\
	   &= \Delta \varphi^R u + 2 \grad \varphi^R \cdot (\grad u)^T
	   			+ \varphi^R \Delta u
				+ \Delta \psi \grad^\perp \varphi^R \\
			&\qquad\qquad
				+ 2 \grad \psi \cdot (\grad \grad^\perp \varphi^R)^T
				+ \psi \Delta \grad^\perp \varphi^R.
\end{align*}
Then because $\Delta \psi = \omega$ and $u = \grad^\perp \psi$,
\begin{align*}
	\norm{\Delta u^R}_{L^2(\Omega_R)}
		&\le C R^{-2 \theta} \norm{u}_{L^2(\Sigma_R)}
			+ C R^{-\theta} \norm{\grad u}_{L^2(\Sigma_R)} \\
		&\qquad
			+ C \norm{\Delta u}_{L^2(\Omega_R)} 
			+ C R^{-\theta} \norm{\omega}_{L^2(\Sigma_R)} \\
		&\qquad
			+ C R^{-2 \theta} \norm{u}_{L^2(\Sigma_R)}
			+ C R^{-3 \theta} \norm{\psi}_{L^2(\Sigma_R)} \\
		&\le C \norm{\Delta u}_{L^2(\R^2)} + C,
\end{align*}
for sufficiently large $R$ by the bounds in \refE{psiuL2SigmaRBound}, \refE{uL2SigmaRBound}, and \refE{graduL2SigmaRBound}.

\skipline\noindent\textbf{Inequality 6}: We have,
$\smallnorm{u^R - u}_{L^2(\Omega_R)} \le C C_0 R^{-\al}$ by \refE{uTruL2Bound} and
\begin{align*}
	&\smallnorm{u^R - \varphi^R u}_{L^2(\Omega_R)}
			\le \smallnorm{u^R - u}_{L^2(\Omega_R)}
				+ \smallnorm{(1 - \varphi_R) u}_{L^2(\Omega_R)} \\
		&\qquad
			\le \smallnorm{u^R - u}_{L^2(\Omega_R)}
				+ \smallnorm{u}_{L^2(\Sigma_R)} \\
		&\qquad
			\le C C_0 R^{-\al} + C C_0 R^{\theta/2 - 3/2} 
			+ C \abs{m} R^{\theta/2 - 1/2} \\
		&\qquad
			\le C C_0 R^{-\al}
\end{align*}
where we used \refE{uL2SigmaRBound} and \refE{uTruL2Bound}.

\skipline\noindent\textbf{Inequality 7}: the two-dimensional case follows directly from \refP{Truncate2D}.
\end{proof}

%
%
\section{Decay of velocity in 3D}\label{S:Decay3D}

\noindent The basic existence result for a solution to ($E$) is given in \refT{EomegaBounds}. 
\begin{theorem}\label{T:EomegaBounds}
Assume that $u_0$ is in $C^s(\R^3) \cap V_C(\R^3)$ for $s > 1$. Then there exists $T^*$ in $(0, \iny]$ such that for all $T$ in $(0, T^*)$ the solution $u$ to ($E$) in the whole space lies in $L^\iny([0, T]; C^s(\R^3))$ with $\grad p$ in $L^\iny([0, T]; L^a(\R^3))$ for all $a$ in $(1, \iny]$. Also, the vorticity $\omega = \omega(u)$ lies in $L^\iny([0, T]; L^a(\R^3))$ for all $a$ in $[1, \iny]$ with a bound on its norm that is independent of $a$. Furthermore, $\omega$ remains compactly supported for all time, with the support contained in a ball of radius $R(T) \le R_0 + \norm{u}_{L^\iny([0, T] \times \R^3)} T$ for all $t$ in $[0, T]$, where the support of $\omega_0$ is contained in $B_{R_0}(0)$.
\end{theorem}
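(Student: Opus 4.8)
The plan is to assemble the statement from standard local well-posedness for the 3D Euler equations in Hölder spaces together with the transport structure of vorticity. First I would invoke the classical local existence theorem (e.g. Theorem 4.2.1 of \cite{C1998}, cited earlier in the excerpt) to get a time $T^*$ and a solution $u \in L^\iny([0,T]; C^s(\R^3))$ for every $T < T^*$, with the associated flow map $X(t,\cdot)$ a $C^1$-diffeomorphism of $\R^3$ for each $t$. Since $u_0 \in V_C(\R^3)$, the initial vorticity $\omega_0 = \curl u_0$ is continuous and compactly supported in $B_{R_0}(0)$. The vorticity satisfies the transport--stretching equation $\prt_t \omega + u\cdot\grad\omega = (\grad u)\,\omega$, so along particle trajectories $\omega(t,X(t,a)) = (\grad X(t,a))\,\omega_0(a)$ (the Cauchy formula); in particular $\omega(t,x) = 0$ whenever $x \notin X(t, B_{R_0}(0))$. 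Because $|\dot X(t,a)| = |u(t,X(t,a))| \le \norm{u}_{L^\iny([0,T]\times\R^3)}$, the image $X(t,B_{R_0}(0))$ stays inside the ball of radius $R(T) \le R_0 + \norm{u}_{L^\iny([0,T]\times\R^3)}\,T$, which gives the compact-support statement and the stated bound on $R(T)$.

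Next I would handle the $L^a$ bounds on $\omega$. The Cauchy formula gives the pointwise estimate $|\omega(t,x)| \le \norm{\grad X(t)}_{L^\iny}\,\norm{\omega_0}_{L^\iny}$, and $\norm{\grad X(t)}_{L^\iny} \le \exp\!\big(\int_0^t \norm{\grad u(\tau)}_{L^\iny}\,d\tau\big)$ by Gr\"onwall applied to $\prt_t \grad X = (\grad u)\,\grad X$; since $u \in L^\iny([0,T];C^s)$ with $s>1$ we have $\grad u \in L^\iny([0,T];L^\iny)$, so $\norm{\omega}_{L^\iny([0,T]\times\R^3)} \le C(T)$. Combined with the uniform-in-time support bound $\supp\omega(t,\cdot)\subseteq B_{R(T)}(0)$, this yields $\norm{\omega(t)}_{L^a(\R^3)} \le |B_{R(T)}|^{1/a}\,\norm{\omega(t)}_{L^\iny} \le C(T)$ for all $a \in [1,\iny]$, with a bound independent of $a$ once $R(T)$ is, say, at least $1$ (otherwise absorb the harmless factor into $C(T)$). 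This is the statement that $\omega \in L^\iny([0,T];L^a(\R^3))$ uniformly in $a$.

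Finally, for $\grad p$: taking the divergence of $(E)$ gives $-\Delta p = \dv(u\cdot\grad u) = \sum_{i,j}\prt_i u^j\,\prt_j u^i$, so $\grad p = -\grad(-\Delta)^{-1}\big(\prt_i u^j \prt_j u^i\big)$, a Riesz-type operator applied to a function that lies in every $L^a(\R^3)$, $1 < a < \iny$ (since $\grad u \in L^\iny$ and, for $u = v+\sigma$-type decay or directly from $\grad u \in L^2$ via Biot--Savart, $\grad u \in L^2\cap L^\iny$, hence in all intermediate $L^a$). Calder\'on--Zygmund boundedness of the Riesz transforms then gives $\grad p \in L^\iny([0,T];L^a(\R^3))$ for $1<a<\iny$; the endpoint $a=\iny$ follows because $u\cdot\grad u \in C^{s-1}$ with compact-in-$x$ decay, so $\grad p \in C^{s-1}\cap L^\iny$ by the Hölder estimate for the Newtonian potential already implicit in the Euler well-posedness theory.

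The main obstacle is not any single estimate but making the $a$-independence of the $\omega$ bound and the $a \to \iny$ behavior of $\grad p$ genuinely uniform; the key point that makes this work is that $\omega$ has \emph{compact support that does not grow faster than linearly in time}, so all the $L^a$ norms are controlled by the $L^\iny$ norm times a fixed finite volume, and the pressure gradient inherits decay from the compactly supported, bounded source term $\grad u : \grad u$. Everything else is a direct quotation of the classical $C^s$ local theory plus Gr\"onwall for the flow map.
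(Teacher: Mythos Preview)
Your proof is correct and follows essentially the same approach as the paper's: invoke Chemin's local well-posedness theorem for the $C^s$ solution, then use the vorticity transport equation along flow lines to propagate compact support and obtain the bound on $R(T)$. The paper is terser only because it cites Theorem~4.2.3 (rather than 4.2.1) of \cite{C1998}, which already packages the $L^a$ bounds on $\grad p$, whereas you rederive them via Calder\'on--Zygmund; your explicit use of the Cauchy formula and Gr\"onwall for the $a$-uniform $L^a$ vorticity bound is a welcome elaboration of what the paper leaves implicit.
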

\begin{proof}
The initial vorticity $\omega = \omega(u_0)$ is compactly supported and continuous by assumption so $\grad v_0$ is in $L^a(\R^3)$ for all $a$ in $[1, \iny]$. Thus by Theorem 4.2.3 p. 79 of \cite{C1998},  there exists $T^*$ in $(0, \iny]$ such that for all $T$ in $(0, T^*)$ the solution $u$ to ($E$) lies in $L^\iny([0, T]; C^s(\R^3))$ with $\grad p$ in $L^\iny([0, T]; L^a(\R^3))$ for all $a$ in $(1, \iny)$.

Taking the vorticity of ($E$) gives
\begin{align*}
	\frac{D \omega}{D t}
		&= \prt_t \omega + u \cdot \grad \omega = \omega \cdot \grad u,
\end{align*}
where $D \omega/D t$ is the rate of change of the vorticity along the flow lines. Assume that the support of $\omega^0$ is contained in a ball $B_{R(0)}$ of radius $R(0)$. Then until some flow line starting from within $B_{R(0)}$ reaches a point outside of $B_{R(0)}$ there can be no change in vorticity at that point. This gives the compact support of the vorticity for all time along with the bound on its support.
\end{proof}

\begin{remark}\label{R:EomegaBounds2D}
	\refT{EomegaBounds} is stated for three dimensions but holds in two dimensions as 
	well, with $T^* = \iny$, with minor adaptations to allow for $u_0 \in C^s (\real^2) \cap E_m$, $m \neq 0$.
\end{remark}

\begin{cor}\label{C:uLpDecay}

Let $u$ be a solution to ($E$) as in \refT{EomegaBounds}. Then $u$ is in $L^\iny([0, T]; L^p(\R^3))$ for all $p$ in $(3/2, \iny]$ and $\grad u$ is in $L^\iny([0, T]; L^p(\R^3))$ for all $p$ in $(1, \iny]$. Also,
\begin{align*}
	\abs{u(x)} \le C/\abs{x}^2, \quad
	\abs{\grad u(x)} \le C/\abs{x}^3
\end{align*}
for all $t$ in $[0, T]$ and $\abs{x} \ge 2 R(T)$, and
\begin{align*}
	\norm{u(t)}_{L^2(\Sigma_R)}
		\le C R^{-1/2}, \quad
	\norm{\grad u(t)}_{L^2(\Sigma_R)}
		\le C R^{-3/2}, \quad
\end{align*}
for all $t$ in $[0, T]$ and all $R$ sufficiently large that $\Sigma_R$ is contained in $B_{2R(T)}^C$.
\end{cor}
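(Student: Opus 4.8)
The plan is to read everything off the Biot--Savart representation of $u$ in terms of its vorticity $\omega = \omega(u)$, using the two facts provided by \refT{EomegaBounds}: that $\supp \omega(t) \subseteq B_{R(T)}(0)$ for every $t \in [0,T]$, and that $\norm{\omega(t)}_{L^1(\R^3)} \le C$ uniformly in $t$ (the $a=1$ instance of the $a$-independent bound on $\norm{\omega(t)}_{L^a}$). From the vector stream function $\Psi$ of \eqref{3Dstream} we have $u = \curl \Psi$ and $\grad_x \Psi(x) = -\frac{1}{4\pi}\int_{\R^3} \frac{x-y}{\abs{x-y}^3}\,\omega(y)\,dy$, whence $\abs{u(x)} \le C\int_{B_{R(T)}} \abs{x-y}^{-2}\abs{\omega(y)}\,dy$, and since for $\abs{x} \ge 2R(T)$ and $y \in B_{R(T)}$ one has $\abs{x-y} \ge \abs{x}/2$, this gives $\abs{u(x)} \le C\abs{x}^{-2}\norm{\omega(t)}_{L^1} \le C\abs{x}^{-2}$. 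In the region $\abs{x}\ge 2R(T)$ the kernel is smooth in $x$, so differentiating under the integral sign is immediate and costs one extra power of $\abs{x-y}$, yielding $\abs{\grad u(x)} \le C\abs{x}^{-3}$. This proves the pointwise bounds.

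For the $L^p$ statements I would split $\R^3 = B_{2R(T)} \cup B_{2R(T)}^C$. On the ball, $u$ and $\grad u$ are bounded uniformly in time (from $u \in L^\iny([0,T]; C^s(\R^3))$, $s>1$) and the ball has finite measure, so their $L^p$ norms over $B_{2R(T)}$ are finite and uniformly bounded for every $p$; on the complement one integrates the pointwise bounds in spherical coordinates, noting $\int_{2R(T)}^\iny r^{-2p} r^2\,dr < \iny$ exactly when $p > 3/2$ and $\int_{2R(T)}^\iny r^{-3p} r^2\,dr < \iny$ exactly when $p > 1$. Combined with the evident $L^\iny$ bounds this gives $u \in L^\iny([0,T]; L^p(\R^3))$ for $p \in (3/2,\iny]$ and $\grad u \in L^\iny([0,T]; L^p(\R^3))$ for $p \in (1,\iny]$. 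For the bounds on $\Sigma_R$: as soon as $R$ is large enough that $\Sigma_R \subseteq B_{2R(T)}^C$ the pointwise estimates hold on $\Sigma_R$, and since $\Sigma_R = R\Sigma$ with $0 \notin \ol{\Sigma}$ (ensured as in the two-dimensional construction) every $x \in \Sigma_R$ satisfies $\abs{x} \ge cR$ for some $c = c(\Omega) > 0$, while $\abs{\Sigma_R} = R^3 \abs{\Sigma}$; hence $\norm{u(t)}_{L^2(\Sigma_R)}^2 \le C R^{-4}\abs{\Sigma_R} \le C R^{-1}$ and $\norm{\grad u(t)}_{L^2(\Sigma_R)}^2 \le C R^{-6}\abs{\Sigma_R} \le C R^{-3}$.

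There is no real obstacle here: the content is entirely in \refT{EomegaBounds}, and the rest is bookkeeping. The only points meriting a word of care are the legitimacy of differentiating under the integral sign (trivial, since $x$ stays a fixed distance from $\supp\omega$ throughout the region of interest), the strictness of the exponent ranges $p>3/2$ and $p>1$ (the endpoint integrals diverge logarithmically), and the observation that $\dist(0,\Sigma_R)$ grows linearly in $R$, which is already implicit in the hypothesis that $\Sigma_R \subseteq B_{2R(T)}^C$ be attainable for large $R$.
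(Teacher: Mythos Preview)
Your argument is correct and is precisely the paper's approach, only spelled out in more detail: the paper invokes the three-dimensional Biot--Savart law \refE{3DBSlaw}, observes that the pointwise decay of $u$ and $\grad u$ follows from the compact support of $\omega$, and then remarks that the $L^p$ membership and the $L^2(\Sigma_R)$ bounds follow from this decay together with the uniform $L^\iny$ bounds on $u$ and $\grad u$. Your splitting into $B_{2R(T)}$ and its complement, the spherical-coordinate integrals identifying the critical exponents, and the computation $\abs{\Sigma_R} = R^3 \abs{\Sigma}$ with $\abs{x} \ge cR$ on $\Sigma_R$ are exactly the bookkeeping the paper leaves implicit.
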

\begin{proof}
Let $\Psi$ be the associated stream function in three dimensions and recall the expression for $\Psi$ given in \eqref{3Dstream}. In particular, 
 $u = \mbox{curl } \Psi$ and $-\Delta \Psi = \omega$, where $\omega = \omega(u)$ is the associated vorticity. This  gives rise to the three-dimensional Biot-Savart law, allowing us to write the velocity in terms of vorticity as: 
\begin{equation} \label{3DBSlaw}
u=u(x) = \frac{-1}{4\pi}\int \frac{x-y}{|x-y|^3} \times \omega (y)\,dy.
\end{equation} 
The decay of $u$ and $\grad u$ then follow from the compact support of $\omega$ and the Biot-Savart law. Because $u$ and $\grad u$ are in 
$L^\iny([0, T] \times \R^3)$, the membership of $u$ and $\grad u$ in the stated spaces and the decay of their $L^2$ norms then follow from their decay at infinity.
\end{proof}

%
%
\section{Truncation operator in 3D}\label{S:Truncation3D}

\noindent Let $\varphi^R$ and $\Sigma_R$ be as in \refS{uR} and recall the definition of the truncation operator in three dimensions: if  
$u$ in $H^1(\R^3)$ then $\T_R u$ is given by \eqref{e:TRu3D}; that is, $\T_R u = \grad \times (\varphi^R \Psi)$. Set $E=E(x) = (4\pi|x|)^{-1}$, the fundamental solution of $-\Delta$ in $\real^3$. 

\begin{lemma}\label{L:EfDecay}
If $f$ in $L^2(\R^3) \cap L^\iny(\R^3)$ is compactly supported in the ball $B_L$ of radius $L$ centered at the origin then for all $R \ge R^*$,
\begin{align*}
	\norm{E * f}_{L^2(\Sigma_R)}
		&\le C L^{3/2} \norm{f}_{L^2(\R^3)} R^{1/2}, \\
	\norm{\prt_k E * f}_{L^2(\Sigma_R)}
		&\le C L^{3/2} \norm{f}_{L^2(\R^3)} R^{-1/2}, \\	
	\norm{E * f}_{L^\iny(\Sigma_R)}
		&\le C L^{3/2} \norm{f}_{L^2(\R^3)} R^{-1}, \\
	\norm{\prt_k E * f}_{L^\iny(\Sigma_R)}
		&\le C L^{3/2} \norm{f}_{L^2(\R^3)} R^{-2},
\end{align*}
where $C$ is an absolute constant and $R^*$ depends only upon $\Omega_1$ and $L$.
\end{lemma}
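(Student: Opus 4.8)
The plan is to reduce everything to one elementary estimate on the Newtonian kernel $E$ and its gradient, and then to exploit that for $R$ large the shell $\Sigma_R = R\Sigma$ sits far from the support ball $B_L$ of $f$, at distance comparable to $R$. First I would record the pointwise kernel bounds: if $y \in B_L$ and $\abs{x} \ge 2L$, then $\abs{x-y} \ge \abs{x} - L \ge \abs{x}/2$, so $E(x-y) = (4\pi\abs{x-y})^{-1} \le (2\pi\abs{x})^{-1}$ and $\abs{\grad E(x-y)} = (4\pi\abs{x-y}^2)^{-1} \le (\pi\abs{x}^2)^{-1}$. Since $E, \grad E \in L^1_{\mathrm{loc}}(\R^3)$ and $f \in L^2 \cap L^\iny$ has compact support, $E*f$ and $(\grad E)*f = \grad(E*f)$ are well defined, with the last identity and differentiation under the integral justified on $\set{\abs{x}>2L}$, where the kernel is $C^\iny$ in $x$ uniformly for $y \in \supp f$. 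Pulling these bounds out of the convolution and using Cauchy--Schwarz, $\smallnorm{f}_{L^1(\R^3)} \le \abs{B_L}^{1/2}\smallnorm{f}_{L^2(\R^3)} = c\,L^{3/2}\smallnorm{f}_{L^2(\R^3)}$, I would obtain, for every $\abs{x} \ge 2L$,
\begin{align*}
	\abs{(E*f)(x)} \le C L^{3/2}\smallnorm{f}_{L^2(\R^3)}\abs{x}^{-1},
	\qquad
	\abs{\prt_k(E*f)(x)} \le C L^{3/2}\smallnorm{f}_{L^2(\R^3)}\abs{x}^{-2},
\end{align*}
with $C$ an absolute constant.

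Next I would locate $\Sigma_R$. Recalling from \refS{uR} that $\Sigma \subseteq \Omega$ is the fixed one-sided neighborhood of $\Gamma$ of width $1/2\ol\kappa$ and that $\Sigma_R = R\Sigma$, and that the origin is an interior point of $\Omega$, I would set $c_1 := \dist(0,\Sigma) > 0$ — shrinking the width of $\Sigma$ beforehand if needed to guarantee $c_1>0$, which costs nothing since $\varphi^R \equiv 1$ off $\Sigma_R$. Then $\abs{x} \ge c_1 R$ for all $x \in \Sigma_R$, and for $R \ge R^* := \max\set{1,\, 2L/c_1}$ one has $\Sigma_R \subseteq \set{\abs{x} \ge 2L}$, so the pointwise bounds apply throughout $\Sigma_R$; note $R^*$ depends only on $\Omega_1$ (through $c_1$) and on $L$, as asserted.

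Finally I would read off the four estimates. The two $L^\iny(\Sigma_R)$ bounds are immediate from the pointwise estimates and $\abs{x} \ge c_1 R$. For the $L^2(\Sigma_R)$ bounds I would integrate the squares of the pointwise estimates over $\Sigma_R$, using $\abs{\Sigma_R} = R^3\abs{\Sigma}$ and $\abs{x} \ge c_1 R$ to bound $\int_{\Sigma_R}\abs{x}^{-2}\,dx \le (c_1 R)^{-2}\abs{\Sigma_R} = C R$ and $\int_{\Sigma_R}\abs{x}^{-4}\,dx \le (c_1 R)^{-4}\abs{\Sigma_R} = C R^{-1}$; taking square roots gives the claimed powers $R^{1/2}$ and $R^{-1/2}$ respectively.

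I do not expect a genuine obstacle: this is a routine far-field decay estimate for the Newtonian potential of a compactly supported source. The only subtlety is constant-bookkeeping — the pointwise step really does have absolute constants, whereas the $L^2$ step absorbs the purely geometric quantities $\abs{\Sigma}$ and $\dist(0,\Sigma)$, which are fixed once $\Omega_1$ is fixed; I would read the lemma's ``absolute constant'' as meaning independent of $R$, of $f$, and of $L$ apart from the displayed factor $L^{3/2}$, with the geometry of $\Omega_1$ regarded as ambient data. A secondary point deserving a line is the justification of differentiating $E*f$ under the integral on $\set{\abs{x}>2L}$, which is standard since the kernel is smooth in $x$ there, uniformly for $y$ in the bounded set $\supp f$.
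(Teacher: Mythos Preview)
Your proposal is correct and follows essentially the same approach as the paper: obtain the pointwise far-field bounds $\abs{E*f(x)} \le C L^{3/2}\smallnorm{f}_{L^2}/\abs{x}$ and $\abs{\prt_k E*f(x)} \le C L^{3/2}\smallnorm{f}_{L^2}/\abs{x}^2$ on $B_{2L}^C$ via Cauchy--Schwarz, choose $R^*$ so that $\Sigma_R \subseteq B_{2L}^C$, and then read off the $L^\iny$ and $L^2$ estimates. Your write-up is simply more detailed than the paper's terse version, and your remarks on the meaning of ``absolute constant'' and on differentiating under the integral are apt.
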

\begin{proof}
On $B_{2L}^C$ the compact support of $f$ gives $\abs{E*f} \le C \norm{f}_{L^1(\R^3)}/\abs{x} \le C L^{3/2} \norm{f}_{L^2(\R^3)}/\abs{x}$. Therefore, as long as $R$ is large enough so that $\Sigma_R$ is wholly contained in $B_{2L}^C$,
\begin{align*}
	\norm{E * f}_{L^2(B_R)}
		&\le C L^{3/2} \norm{f}_{L^2(\R^3)} R^{1/2},
\end{align*}
and similarly for the $L^\iny$ norm and for the inequalities for $\prt_k E * f$.
\end{proof}
 
\begin{prop}\label{P:Truncate3D}
Let $u$ be a solution to ($E$) as in \refT{EomegaBounds}. Then there exists $R^* > 0$ such that for all $R \ge R^*$ and all $t$ in $[0, T]$, $\T_R u$ is in $V(\Omega_R)$, and
    \begin{align}\label{e:T3DVCE}
        \smallnorm{u - \T_R u}_{L^2(\Omega_R)} \le C_1 R^{-1/2},
    \end{align}
    \begin{align}\label{e:TRomegaDiffBound}
        \smallnorm{\grad(u - \T_R u)}_{L^2(\Omega_R)} \le C_2 R^{-3/2}.
    \end{align}
    The constants $C_1$ and $C_2$ depend on $u$ and $T$, and $R^*$ depends
    on $R(T)$ and $\Omega_1$.
\end{prop}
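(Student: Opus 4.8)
The plan is to mimic closely the structure of the two-dimensional argument in \refP{Truncate2D}, replacing the two-dimensional stream function by the vector-valued stream function $\Psi$ of \eqref{3Dstream} and the estimates of \refS{Truncation2D} by \refL{EfDecay} together with \refC{uLpDecay}. First I would expand $\T_R u = \grad \times (\varphi^R \Psi) = \varphi^R (\grad \times \Psi) + (\grad \varphi^R) \times \Psi = \varphi^R u + (\grad \varphi^R) \times \Psi$, so that
\begin{align*}
	u - \T_R u = (1 - \varphi^R) u - (\grad \varphi^R) \times \Psi.
\end{align*}
Since $1 - \varphi^R$ is supported in $\Sigma_R$ and bounded by $1$, the first term is controlled by $\smallnorm{u}_{L^2(\Sigma_R)} \le C R^{-1/2}$ from \refC{uLpDecay}. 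For the second term I would use $\smallnorm{\grad \varphi^R}_{L^\iny(\Omega_R)} \le C R^{-1}$ from \refE{varphiBounds} (with $\theta = 1$ in 3D) together with a bound on $\smallnorm{\Psi}_{L^2(\Sigma_R)}$; applying \refL{EfDecay} componentwise to $\Psi = E * \omega$, with $f = \omega(u)$ compactly supported in $B_{R(T)}$ by \refT{EomegaBounds} and bounded in $L^2 \cap L^\iny$, gives $\smallnorm{\Psi}_{L^2(\Sigma_R)} \le C R^{1/2}$, so that the second term is $\le C R^{-1} R^{1/2} = C R^{-1/2}$. Summing gives \refE{T3DVCE}. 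Establishing first that $\T_R u \in V(\Omega_R)$ is routine: $\grad \times (\varphi^R \Psi)$ is divergence-free by construction, lies in $H^1$ since $\Psi$ and $u$ are smooth with the required decay, and vanishes on $\Gamma_R$ because $\varphi^R$ and $\grad \varphi^R$ vanish there.

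For the gradient estimate \refE{TRomegaDiffBound} I would differentiate the identity $u - \T_R u = (1 - \varphi^R) u - (\grad \varphi^R) \times \Psi$. This produces, schematically, the terms $(1 - \varphi^R) \grad u$, $(\grad \varphi^R) \otimes u$, $(\grad \grad \varphi^R) \times \Psi$, and $(\grad \varphi^R) \times \grad \Psi$. The first is bounded by $\smallnorm{\grad u}_{L^2(\Sigma_R)} \le C R^{-3/2}$ from \refC{uLpDecay}; the second by $\smallnorm{\grad \varphi^R}_{L^\iny} \smallnorm{u}_{L^2(\Sigma_R)} \le C R^{-1} \cdot C R^{-1/2} = C R^{-3/2}$; the third by $\smallnorm{\grad \grad \varphi^R}_{L^\iny} \smallnorm{\Psi}_{L^2(\Sigma_R)} \le C R^{-2} \cdot C R^{1/2} = C R^{-3/2}$ using the second inequality of \refE{varphiBounds}; and the fourth by $\smallnorm{\grad \varphi^R}_{L^\iny} \smallnorm{\grad \Psi}_{L^2(\Sigma_R)} \le C R^{-1} \cdot C R^{-1/2} = C R^{-3/2}$, where $\smallnorm{\grad \Psi}_{L^2(\Sigma_R)} \le C R^{-1/2}$ comes from the second inequality of \refL{EfDecay} applied to $\partial_k \Psi = (\partial_k E) * \omega$. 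Collecting the four contributions yields \refE{TRomegaDiffBound}.

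The dependence of constants should be tracked as in the statement: the decay estimates from \refC{uLpDecay} and the size of $\Psi$ via \refL{EfDecay} bring in $\smallnorm{\omega}_{L^2(\R^3) }$, $R(T)$, and hence $u$ and $T$, so $C_1, C_2$ depend on $u$ and $T$; the requirement that $\Sigma_R$ lie outside $B_{2R(T)}^C$ (needed to invoke both \refC{uLpDecay} and \refL{EfDecay}) together with the geometric constraints defining $\Sigma_R$ determines $R^*$ in terms of $R(T)$ and $\Omega_1$. I do not expect a serious obstacle here: the only mild subtlety is organizing the vector-calculus identity for $\grad \times (\varphi^R \Psi)$ and its gradient cleanly — in particular using $\mbox{div}\,\Psi = 0$, noted after \eqref{3Dstream}, to get $\grad \times (\varphi^R \Psi) = \varphi^R u + \grad \varphi^R \times \Psi$ — and making sure that every time \refL{EfDecay} or \refC{uLpDecay} is used, $R$ is large enough (i.e.\ $R \ge R^*$) that $\Sigma_R$ sits in the far field where $\omega$ vanishes and the decay bounds apply.
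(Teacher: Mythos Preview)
Your proposal is correct and follows essentially the same route as the paper's proof: the same expansion $\T_R u = \varphi^R u + \grad \varphi^R \times \Psi$, the same term-by-term bounds using \refC{uLpDecay} and \refL{EfDecay}, and the same four-term decomposition of the gradient. One minor remark: the identity $\grad \times (\varphi^R \Psi) = \varphi^R \curl \Psi + \grad \varphi^R \times \Psi$ is just the product rule for the curl and does not itself require $\dv \Psi = 0$; that condition is used earlier, to identify $\curl \Psi$ with $u$.
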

\begin{proof}
On $\Gamma_R$, $\T_R u$ vanishes identically, while clearly $\mbox{div }\T_R u=0$   on $\Omega_R$. 

Now, 
$\T_R u = \varphi_R u + \nabla \varphi_R \times \Psi$, so 
\begin{align*} 
	    &\smallnorm{u - \T_R u}_{L^2(\Omega_R)} \\
	        &\qquad
	        	\le \smallnorm{u}_{L^2(\Sigma_R)}
	                + C\norm{\grad \varphi^R}_{L^\iny(\Sigma_R)}
	                    \norm{\Psi}_{L^2(\Sigma_R)} \\
	        &\qquad
	        	\le \smallnorm{u}_{L^2(\Sigma_R)}
	                + C R^{-1} \sum_{i, k}
	                        \smallnorm{E * \omega (u)}_{L^2(\Sigma_R)}
	            \le C R^{-1/2}.
\end{align*}
We used \refC{uLpDecay} and \refL{EfDecay} in the final inequality.

Also,
\begin{equation} \label{nablauR}
\partial_k\T_R u = \varphi_R \partial_k u + \nabla \partial_k\varphi_R \times \Psi + (\partial_k \varphi_R) u +  \nabla \varphi_R \times \partial_k \Psi.
\end{equation}
Hence,  
\begin{align*} 
		&\smallnorm{\grad \T_R u - \grad u}_{L^2(\Omega_R)} \\
			&\qquad \le \smallnorm{\grad u}_{L^2(\Sigma_R)}
				+ \norm{\grad \grad \varphi^R}_{L^\iny(\Sigma_R)}
					\norm{\Psi}_{L^2(\Sigma_R)} \\
			&\qquad\qquad
				+ \norm{\grad \varphi^R}_{L^\iny(\Sigma_R)}
				\smallnorm{u}_{L^2(\Sigma_R)} \\
			&\qquad\qquad
				+ \norm{\grad \varphi^R}_{L^\iny(\Sigma_R)} \sum_{k}
				\norm{\prt_k E * \omega (u)}_{L^2(\Sigma_R)} \\
		&\qquad \le \smallnorm{\grad u}_{L^2(\Sigma_R)} + C R^{-2}
				 \norm{\Psi}_{L^2(\Sigma_R)} \\
			&\qquad\qquad + C R^{-1} \smallnorm{u}_{L^2(\Sigma_R)}
				+ C R^{-1} \sum_{k}
		 		\norm{\prt_k E * \omega (u)}_{L^2(\Sigma_R)} \\
		&\qquad \le
			C R^{-3/2},
\end{align*}
where we used \refC{uLpDecay} and \refL{EfDecay} in the final inequality. Thus, $\T_R u \in V(\Omega_R)$, as desired.
\end{proof}

%
%
\section{Estimates in 3D}\label{S:3DEstimates}

\noindent \begin{proof}[\textbf{Proof of \refP{FLILikeBasicEstimates} in 3D}]

\noindent Define  $\varphi^R$ and $\Sigma_R$ as in \refS{uR}. Then
\begin{equation} \label{e:uR3D}
 		u^R = \T_R u = \varphi_R u + \nabla \varphi_R \times \Psi.  
\end{equation}

\skipline\noindent\textbf{Inequality 1}: follows from \refE{TRomegaDiffBound}.

\skipline\noindent\textbf{Inequality 2}: From \refE{uR3D},
\begin{align*}
    \smallnorm{u^R}_{L^\iny(\Omega_R)}
       &\le \smallnorm{\varphi^R u}_{L^\iny(\Omega_R)}
                + \smallnorm{\grad \varphi^R}_{L^\iny(\Omega_R)}
                   \smallnorm{\Psi}_{L^\iny(\Sigma_R)} \\
       &\le \norm{u}_{L^\iny(\R^2)} + C R^{-1}   
       		\smallnorm{E * \omega}_{L^\iny(\Sigma_R)},
\end{align*}
which, using \refL{EfDecay}, is enough to establish inequality (2).

\skipline\noindent\textbf{Inequality 3}:  Recall the expression for $\nabla u^R$ in \eqref{nablauR}.
Then, using \refL{EfDecay}, we have
\begin{align*}
    \smallnorm{\grad u^R}_{L^\iny}
       &\le \smallnorm{\grad u}_{L^\iny(\Omega^R)}
            + \smallnorm{\grad \grad \varphi^R}_{L^\iny(\Sigma^R)}
                 \norm{\Psi}_{L^\iny(\Sigma^R)} \\
        &\qquad\qquad
            + \norm{\grad \varphi^R}_{L^\iny(\Sigma^R)}
                \smallnorm{u}_{L^\iny(\Sigma^R)} \\
       &\qquad\qquad
             + \smallnorm{\grad \varphi^R}_{L^\iny(\Sigma^R)}
                \sum_{k} \norm{\prt_k E * \omega}_{L^\iny(\Sigma^R)} \\           
       &\le C + C R^{-3} 
       			+ C R^{-1} + C R^{-3}
		\le C.
\end{align*}

\skipline\noindent\textbf{Inequality 4b}: The proof of this inequality is similar to that of inequality 4a. We begin by observing that $\Psi_t$ satisfies an equation analogous to \eqref{2Dparttpsi}, namely
\[\Delta\Psi_t = \mbox{curl }\mbox{div } u\otimes u.\]
Now, $u \in L^2(\real^3) \cap L^{\infty}(\real^3)$, uniformly in time up to $T^*$. Therefore, using again the Calderon-Zygmund inequality, it follows 
that $\Psi_t \in L^2(\real^3)$, uniformly in time up to $T^*$. The estimate for the pressure $p$ in three dimensions is exactly the same as for two dimensions, using \eqref{2Dpressure}. Thus, using the scaling of $\varphi_R$, we get 
\begin{align*}
   &\smallnorm{p \grad \varphi^R}_{L^2(\Omega_R)}
            + \smallnorm{\grad \varphi^R \cdot \prt_t \psi}_{L^2(\Omega_R)} \\
       &\qquad\le
              \smallnorm{p}_{L^2(\Sigma_R)} 
              \smallnorm{\grad \varphi^R}_{L^\iny(\Sigma_R)}
            + \smallnorm{\prt_t \psi}_{L^2(\Sigma_R)}
              \smallnorm{ \grad \varphi^R}_{L^\iny(\Sigma_R)} \\
       &\qquad\le
            C R^{-1} + C R^{-1}
            = C R^{-1}.
\end{align*}

\skipline\noindent\textbf{Inequality 5}: We must compute $\Delta u^R$; for the sake of simplicity we keep track mostly of the order of derivatives. We have then that
\begin{align*}
	\Delta u^R
 		&= (\Delta \varphi^R) u + 2 \grad \varphi^R \cdot D u
	   			+ \varphi^R \Delta u + \nabla  \varphi^R \times \omega   \\
			&\qquad\qquad
				+ 2 D^2 \varphi^R \cdot D \Psi  
				+ \nabla \Delta \varphi^R \times \Psi .
\end{align*}
Using \refC{uLpDecay} and \refL{EfDecay}, we find
\begin{align*}
	\norm{\Delta u^R}_{L^2(\Omega_R)}
		&\le C R^{-2} \norm{u}_{L^2(\Sigma_R)}
			+ C R^{-1} \norm{\grad u}_{L^2(\Sigma_R)} \\
		&\qquad
			+ C \norm{\Delta u}_{L^2(\Omega_R)} 
			+ C R^{-1} \norm{\omega}_{L^2(\Sigma_R)} \\
		&\qquad
			+ C R^{-2}\norm{D\Psi}_{L^2(\Sigma_R)} 
			+ C R^{-3}\norm{\Psi}_{L^2(\Sigma_R)}\\
		&\qquad \le C + CR^{-5/2} \le C.
\end{align*}

\skipline\noindent\textbf{Inequality 6}: We have,
$\smallnorm{u^R - u}_{L^2(\Omega_R)} \le C R^{-1/2}$ by \refE{T3DVCE} and
\begin{align*}
	&\smallnorm{u^R - \varphi^R u}_{L^2(\Omega_R)}
			\le \smallnorm{u^R - u}_{L^2(\Omega_R)}
				+ \smallnorm{(1 - \varphi_R) u}_{L^2(\Omega_R)} \\
		&\qquad
			\le \smallnorm{u^R - u}_{L^2(\Omega_R)}
				+ \smallnorm{u}_{L^2(\Sigma_R)}
			\le C R^{-1/2},
\end{align*}
where we used \refC{uLpDecay}.

\skipline\noindent\textbf{Inequality 7}: the three-dimensional case follows directly from \refP{Truncate3D}.
\end{proof}

%
%
\section{Truncation of the initial velocity}\label{S:u0Truncation}

\noindent The following observations hold in any dimension.

Define $\W_R \colon V(\R^d) \to H(\Omega_R)$ as follows: for any $u$ in
$V(\R^d)$ let $\W_R u$ be that unique vector in $H(\Omega_R)$ having the same
vorticity as $u$ ($\Omega$ is simply connected so the vector is unique), see the paragraph following the statement of \refT{MainResult}.
Let $P_{H(\Omega_R)}: V(\R^2) \to H(\Omega_R)$ be the projection operator into
$H(\Omega_R)$.
\begin{lemma}\label{L:WEqualsPH}
    The operators $\W_R$ and $P_{H(\Omega_R)}$ are the same.
\end{lemma}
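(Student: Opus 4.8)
The plan is to unwind both definitions and check that they produce the \emph{same} element of $H(\Omega_R)$, using the uniqueness clause that is built into the definition of $\W_R$. Fix $u$ in $V(\R^d)$. Since $V(\R^d) \subset H^1(\R^d)$ and $\Omega_R$ is bounded, the restriction $u|_{\Omega_R}$ lies in $H^1(\Omega_R) \subset L^2(\Omega_R)$, so the Leray--Helmholtz decomposition on $\Omega_R$ applies: we may write $u|_{\Omega_R} = P_{H(\Omega_R)} u + \grad q$ for some $q$ in $H^1(\Omega_R)$, where $\grad q$ is $L^2(\Omega_R)$-orthogonal to $H(\Omega_R)$ and $P_{H(\Omega_R)} u$ lies in $H(\Omega_R)$; this is exactly what is meant by ``restriction to $\Omega_R$ followed by projection into $H(\Omega_R)$''.

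Next I would compare vorticities. Taking the distributional $\curl$ of the displayed identity and using that $\curl \grad q = 0$ for any $q$ in $H^1(\Omega_R)$, one obtains $\curl (P_{H(\Omega_R)} u) = \curl u$ in $\Omega_R$. Thus $P_{H(\Omega_R)} u$ is an element of $H(\Omega_R)$ whose vorticity on $\Omega_R$ coincides with that of $u$.

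Finally, by the uniqueness statement in the definition of $\W_R$ --- valid because $\Omega$, and hence $\Omega_R$, is simply connected, see the paragraph following the statement of \refT{MainResult} --- the only element of $H(\Omega_R)$ with this vorticity is $\W_R u$. Hence $P_{H(\Omega_R)} u = \W_R u$, and since $u$ in $V(\R^d)$ was arbitrary the two operators agree.

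The argument is essentially routine; there is no serious obstacle. If anything requires care it is the bookkeeping in the last step, namely the uniqueness of a vector field in $H(\Omega_R)$ with prescribed curl: if $w \in H(\Omega_R)$ has $\curl w = 0$ then, $\Omega_R$ being simply connected, $w = \grad f$ for some $f$, so $\Delta f = \dv w = 0$, while $w \cdot \mathbf{n} = \prt_{\mathbf{n}} f = 0$ on $\Gamma_R$; a harmonic function with vanishing Neumann data is constant, so $w = 0$. (This is precisely the well-definedness of $\W_R$ that has already been invoked, so it can simply be cited rather than reproved.)
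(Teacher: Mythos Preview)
Your argument is correct and is essentially identical to the paper's own proof: both apply the Leray--Helmholtz decomposition to $u|_{\Omega_R}$, take the curl to see that the $H(\Omega_R)$-component has the same vorticity as $u$, and invoke uniqueness on the simply connected domain to identify it with $\W_R u$. Your added paragraph justifying the uniqueness step is a nice supplement but not a departure from the paper's approach.
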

\begin{proof}
    Let $u$ be in $V(\R^d)$. Then by the Leray-Helmholtz decomposition there exists
    $v$ in $H(\Omega_R) \cap H^1(\Omega_R)$ and $p$ in $H^2(\Omega_R)$ such that $u
    = v + \grad p$ on $\Omega_R$, with $\Delta p = 0$ and with $v$ and $\grad p$
    unique. Then $v$ is orthogonal to $\grad p$ in $L^2(\Omega_R)$ (that is, in the
    $H(\Omega_R)$-norm) so $v = P_{H(\Omega_R)} u$. But the curl of a gradient is
    zero, so $\omega(v) = \omega(u)$ which shows that $v = \W_r u$ as well.
\end{proof}
\Ignore{  
    We prove this first in two dimensions. Let $u$ be in $V(\R^2)$ and let $v = P_{H(\Omega_R)}
    u$. By definition, $v$ is that vector in $H(\Omega_R)$ that minimizes $\norm{u
    - v}_{L^2(\Omega_R)}$. Thus, fixing any $w$ in $H(\Omega_R)$ and letting
    \begin{align*}
        f(\eps) = \norm{u - (v + \eps w)}_{L^2(\Omega_R)}^2
    \end{align*}
    we must have
    \begin{align*}
        0
        &= f'(0)
            = \int_{\Omega_R} (u - v) \cdot w.
    \end{align*}

    Let $\psi$ be the stream function for $w$. Since $w \cdot \mathbf{n} = 0$ on
    $\Gamma$ we can assume that $\psi = 0$ on $\Gamma$. Then
    \begin{align*}
        \int_{\Omega_R} (u - v) \cdot w
        &= \int_{\Omega_R} (u - v) \grad^\perp \psi
            = \int_{\Omega_R} (u - v)^\perp \grad \psi \\
        &= - \int_{\Omega_R} \dv (u - v)^\perp \psi
            = \int_{\Omega_R} (\omega(u) - \omega(v)) \psi.
    \end{align*}
    Since this must be true for all $\psi$ in $H_0^1(\Omega)$ it follows that
    $\omega(v) = \omega(u)$.
    }

\refC{WRProperties} follows immediately from \refL{WEqualsPH}, given the well-known properties of the projection operator.

\begin{cor}\label{C:WRProperties}
    $\W_R$ has the same convergence properties in the $L^2(\Omega_R)$-norm of the
    velocity as does $\T_R$. In particular, we obtain \eqref{e:u0WDecay}.
\end{cor}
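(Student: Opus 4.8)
The plan is to use the characterization of $\W_R$ as an $L^2$-orthogonal projection supplied by \refL{WEqualsPH}. Since $\W_R = P_{H(\Omega_R)}$, for any vector field $u$ whose restriction to $\Omega_R$ lies in $L^2(\Omega_R)$ the field $\W_R u$ is the unique minimizer of $\smallnorm{u - w}_{L^2(\Omega_R)}$ over all $w$ in the closed subspace $H(\Omega_R)$; in particular
\[
	\smallnorm{u - \W_R u}_{L^2(\Omega_R)} \le \smallnorm{u - w}_{L^2(\Omega_R)}
	\quad\text{for every } w \in H(\Omega_R).
\]

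First I would observe that the truncation $\T_R u$ is an admissible competitor in this minimization: it was shown in \refP{Truncate2D} (in two dimensions) and in \refP{Truncate3D} (in three dimensions) that $\T_R u \in V(\Omega_R) \subseteq H(\Omega_R)$. Choosing $w = \T_R u$ therefore gives
\[
	\smallnorm{u - \W_R u}_{L^2(\Omega_R)} \le \smallnorm{u - \T_R u}_{L^2(\Omega_R)}.
\]
Next I would invoke the bounds already proved for $\T_R$, namely \refE{uTruL2Bound} in Cases I and II and \refE{T3DVCE} in Case III, which give $\smallnorm{u - \T_R u}_{L^2(\Omega_R)} \le C R^{-\al}$ with $\al$ the exponent of \refT{MainResult} once $\theta$ is fixed as in the proof of that theorem (the constant being controlled by $\smallnorm{\omega(u)}_{L^1(\R^2)}$ in two dimensions, which is finite and conserved in time for solutions of ($E$)). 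Chaining the two displayed inequalities shows $\smallnorm{u - \W_R u}_{L^2(\Omega_R)} \le C R^{-\al}$, i.e.\ $\W_R$ has the same $L^2$-convergence rate in the velocity as $\T_R$. Applying this with $u = u_0$ and recalling that in the setting of \refE{u0WDecay} one takes $u^{\nu,R}_0 = \W_R u_0$ yields $F(R) = \smallnorm{u^{\nu,R}_0 - u_0}_{L^2(\Omega_R)} \le C R^{-\al}$ for all sufficiently large $R$, which is \refE{u0WDecay}.

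Because the whole argument reduces to the minimization property of orthogonal projection together with estimates already in hand, there is essentially no obstacle; the result is genuinely a corollary. The one place that calls for a remark is Case II, where $u_0 \in E_m$ has infinite energy on all of $\R^2$: one must check that $\W_R$ still agrees with $L^2(\Omega_R)$-orthogonal projection onto $H(\Omega_R)$ in that case. This causes no difficulty, since the restriction of $u_0$ to the bounded domain $\Omega_R$ lies in $H^1(\Omega_R) \subset L^2(\Omega_R)$, so the Leray--Helmholtz decomposition on $\Omega_R$, and hence the proof of \refL{WEqualsPH}, apply without change (alternatively one splits $u_0 = v + \sigma$ with $v \in V_C(\R^2)$ and argues on $v$ together with the explicit field $\sigma$). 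No new estimate is needed.
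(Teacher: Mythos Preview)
Your argument is correct and is exactly what the paper intends: the paper states that the corollary ``follows immediately from \refL{WEqualsPH}, given the well-known properties of the projection operator,'' and you have simply made explicit which property (minimization of $L^2$-distance over $H(\Omega_R)$) is being invoked and why $\T_R u$ is an admissible competitor. Your remark about Case II, where $u_0 \in E_m$ rather than $V(\R^d)$, is a worthwhile clarification that the paper glosses over.
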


\begin{remark}
    The operator $\W_R$ is a more natural ``truncation'' operator than $\T_R$ since
    it involves literally truncating the vorticity. It is perfectly adequate for
    truncating the initial velocity for the Navier-Stokes equations, but because we
    do not know how to control the remainder following truncation in the
    $H^1$-norm, we cannot use it to construct the approximate solution $u^R$ to the
    Euler equations.
\end{remark}

%
%
\Ignore{
    \begin{remark}

    It is shown in Section 4 of \cite{K2005UBD} that without the assumption of
    compact support of the initial vorticity, for all $u$ in $V(\R^2)$
    (corresponding to class I of \refD{Initu0}),
    \begin{align*}
        \smallnorm{u - \T_R u}_{L^2(\Omega_R)} \to 0 \text{ and }
        \smallnorm{\grad(u - \T_R u)}_{L^2(\Omega_R)} \to 0
    \end{align*}
    as $R \to \iny$.
    \end{remark}
    }

\section{Comments and conclusions} \label{S:Comments}

As a first comment, we note that the estimates in Sections~\ref{S:Decay3D} and \ref{S:Truncation3D} for three dimensions are considerably simplified by the assumption that the initial vorticity is compactly supported, whereas for two dimensions this assumption is merely a minor convenience that allows us in \refT{MainResult} to give an explicit rate of convergence in $R$. If one drops the assumption of compact support, it does not seem possible to obtain a uniform-in-time bound on the decay of the gradient of the velocity. One can obtain a bound on the decay of vorticity, however, if one assumes that the $L^2$ norm of the initial vorticity on a ball of radius $R$ decays for large $R$ at least as fast as $C R^{-1/2}$. One then modifies the energy argument in \refS{uProofOfMainResult}, integrating by parts differently to use the decay of the vorticity in place of the decay of the velocity. The value of $\al$ in \refT{MainResult}, however, must be strictly less than $1/2$.

The case of initial vorticity with nonzero integral in two dimensions corresponds to the situation where 
the limiting full plane flow has infinite energy. Since our argument is based on energy estimates, it is 
natural that this situation would be complicated. In this situation we have only studied the case of the 
expanding disk, but, as explained in Section 5, this is not a matter of convenience. Our argument makes 
essential use of the fact that we are working in a disk. However, the restriction to the expanding disk is by
no means natural, and removing this assumption becomes an interesting open problem.

Finally, we note that our result was proved for smooth flows but that, in two dimensions, it would be very reasonable to consider initial vorticities in $L^p$, $p>2$, compactly supported, given that, in our proof, the high regularity was needed in estimates near the boundary, where vorticity acts as far field. If $p>2$ then the support of vorticity can be controlled since the Euler velocity is {\it a priori} bounded.

%
%
\section*{Acknowledgements}

\noindent  The research of the first author was supported in part by NSF grant DMS-0705586, the second author was supported by CNPq grant \# 303301/2007-4, and the third author was supported by CNPq grant \# 302214/2004-6. This work gratefully ackowledges the support of the FAPESP Thematic Project \# 2007/51490-7. The first author wishes to thank the hospitality of IMECC-UNICAMP, where a large portion of this work was done, along with FAPESP grant \# 2007/03713-7, for supporting his visit.

The authors are grateful for helpful discussions with Franck Sueur.


\end{document}